\definecolor{mno}{rgb}{0.5,0.1,0.5}
\renewcommand{\leq}{\leqslant}
\renewcommand{\geq}{\geqslant}
\renewcommand{\le}{\leqslant}
\renewcommand{\ge}{\geqslant}
\newcommand{\R}{\mathds R}
\newcommand{\B}{\mathds B}
\newcommand{\Hh}{\mathds H}
\newcommand{\Uu}{\mathds U}
\newcommand{\Pp}{\mathds P}
\newcommand{\Ee}{\mathds E}
\newcommand{\I}{\mathds 1}
\newcommand{\N}{\mathds N}
\newcommand{\var}{\mathrm{Var}}
\newcommand{\Bb}{\mathscr{B}}
\newcommand{\scalp}[2]{\langle#1,#2\rangle}
\newcommand{\supp}{\operatorname{supp}}
\newcommand{\Rank}{\operatorname{Rank}}
\newcommand{\Leb}{\operatorname{Leb}}
\newcommand{\nnu}{\mathsf{n}}
\newtheorem{theorem}{Theorem}[section]
\newtheorem{proposition}[theorem]{Proposition}
\newtheorem{corollary}[theorem]{Corollary}
\theoremstyle{definition}
\newtheorem{example}[theorem]{Example}
\newtheorem{remark}[theorem]{Remark}
\newtheorem*{ack}{Acknowledgement}
\begin{document}
\allowdisplaybreaks
\title[Linear evolution equations with cylindrical
L\'{e}vy noise]{\bfseries Linear Evolution Equations with
Cylindrical L\'{e}vy Noise: Gradient
Estimates and Exponential Ergodicity}

\author{Jian Wang}

\thanks{
\emph{J.\ Wang:}
School of Mathematics and Computer Science, Fujian Normal
University, 350007, Fuzhou, P.R. China.
\texttt{jianwang@fjnu.edu.cn}}

\date{}

\maketitle

\begin{abstract} Explicit coupling property and gradient estimates are investigated for the linear
evolution equations on Hilbert spaces driven by an additive
cylindrical L\'{e}vy process. The results are efficiently applied to establish the exponential
ergodicity for the associated transition semigroups. In particular, our
results extend recent developments on related topic for
cylindrical symmetric $\alpha$-stable processes.

\medskip

\noindent\textbf{Keywords:} Cylindrical L\'{e}vy processes; linear
evolution equations; coupling property; gradient estimates;
exponential ergodicity

\medskip

\noindent \textbf{MSC 2010:} 60H15; 60J75; 60G51; 35R60.
\end{abstract}

\section{Introduction and Main Results}\label{section1}
Let $\Hh$ be a real separable Hilbert space with the norm $\|\cdot\|$, and $(A, D(A))$ be a
linear possibly unbounded operator which generates a $C_0$-semigroup
(i.e.\ a strongly continuous one-parameter semigroup of linear
operators) $(T_t)_{t\ge0}$ on $\Hh$. Consider the following linear
evolution equation:
\begin{equation}\label{eq100} dX_t=AX_t\,dt+ dZ_t,\quad t\ge0,\quad X_0=x\in
\Hh,\end{equation} where $Z=(Z_t)_{t\ge0}$ is an infinite
dimensional L\'{e}vy process which may take values in a Hilbert
space $\Uu$ usually greater than $\Hh$. The Markov process
$X=(X_t^x)_{t\ge0}$ determined by \eqref{eq100} (if exists) is
called a L\'{e}vy driven Ornstein-Uhlenbeck process, e.g.\ see
\cite{App,App1,Ch,PZ, Ri} and the references therein. The associated
transition semigroup acting on ${B}_b(\Hh)$, the class of all
bounded measurable functions on $\Hh$, is given by
\begin{equation*} \label{eq4} P_tf(x):=\Ee f(X_t^x),\end{equation*}
which is called a generalized Mehler semigroup in the sense of \cite{BR,FR,LR}.

Throughout this paper we suppose that the infinite dimensional L\'{e}vy process
$Z=(Z_t)_{t\ge0}$ in \eqref{eq100} is defined by the orthogonal expression
\begin{equation} \label{eq2} Z_t=\sum_{n=1}^\infty Z_t^ne_n,\quad t\ge0,\end{equation}
where $(e_n)$ is an orthonormal basis of $\Hh$ and
$(Z_t^n)_{t\ge0,n\ge1}$ are independent real valued pure jump
L\'{e}vy processes defined on a fixed stochastic basis. We call
$(Z_t)_{t\ge0}$ given by \eqref{eq2} an additive cylindrical
L\'{e}vy process, see \cite{AM} for the recent study on cylindrical L\'{e}vy processes. We also need the following
assumption on the operator $(A,D(A))$:

\indent\paragraph{\bf{{Hypothesis (H)}}}
 \emph{ The operator $(A, D(A))$ is a self-adjoint operator such that for the
  fixed basis $(e_n)$ in \eqref{eq2} it verifies that $(e_n)\subset D(A)$, $Ae_n=-\gamma_ne_n$ with $\gamma_n>0$
  for any $n\ge1$, and $\lim\limits_{n\to\infty}\gamma_n=\infty.$ }

That is, we require that the basis $(e_n)$ from the representation
\eqref{eq2} are eigenvectors of $A$. Then, we can identify $\Hh$
with $l^2=\{x=(x_n): \sum_{n=1}^\infty x_n^2<\infty\},$ and $A$ with
the diagonal operator $Ax_n=-\gamma_n x_n$ for $(x_n)\in l^2$ and
$n\ge1$. Thus, under assumptions \eqref{eq2} and {\bf (H)}, the
equation \eqref{eq100} can be solved for each coordinate separately,
i.e.,
$$dX_t^{x,n}=-\gamma_nX_t^{x,n}\,dt+dZ_t^n,\quad X_0^{x,n}=x_n,\,\, n\in \N,$$ with $x=(x_n)\in l^2$. Therefore, the unique solution to the equation \eqref{eq100} can be seen as a
stochastic process $X=(X_t^x)_{t\ge0}$ taking values in
$\R^\N$ with components
\begin{equation}\label{th2000} X_t^{x,n}=e^{-\gamma_nt}x_n+ \int_0^te^{-\gamma_n(t-s)}\,dZ_s^n,\quad n\in \N, t\ge0.\end{equation}

In the following, we always assume that the process
$X=(X_t^x)_{t\ge0}$ takes values in $\Hh$. According to Corollary \ref{thex1} below, this is guaranteed if and only if  \begin{equation}\label{not2}\sum_{n=1}^\infty \int_0^1\bigg(e^{-2\gamma_n s}\int_{\{|z|\le e^{\gamma_ns}\}}z^2\,\nu_n(dz)+ \int_{\{|z|>e^{\gamma_ns}\}}\,\nu_n(dz)\bigg)\,ds<\infty,
\end{equation} where for any $n\ge1$, $\nu_n$ is the L\'{e}vy measure of the
L\'{e}vy process $(Z_t^n)_{t\ge0}$ on $\R$.
 We first study the coupling property and explicit gradient
estimates of the associated Markov semigroup for the process $X$. Recall that the process $X$ has
successful couplings (or has the coupling property) if and only if
for any $x,y\in\Hh$,
\begin{equation*}\label{prex1}
    \lim_{t\rightarrow\infty}\|P_t(x,\cdot)-P_t(y,\cdot)\|_{\var}=0,
\end{equation*}
where $P_t(x,dz)$ is the transition kernel of the process $X$ and
$\|\cdot\|_{\var}$ stands for the total variation norm. Let $(P_t)_{\ge0}$ be the transition semigroup of the  process $X$. The uniform norm of its gradient is defined as follows:
 $$\|\nabla P_t\|_\infty:=\sup\bigg\{ |\nabla_zP_tg(x)|: \|z\|\le 1, x\in \Hh, g\in B_b(\Hh)\textrm{ with } \|g\|_{\Hh,\infty}\le1\bigg\},$$ where
$$|\nabla_zP_tg(x)|:=\limsup_{\varepsilon\to 0}\frac{1}{\varepsilon}\Big|P_tg(x+\varepsilon z)-P_tg(x)\Big|,$$ and
$\|g\|_{\Hh,\infty}$ is denoted by the supremun norm, i.e.\
$\|g\|_{\Hh, \infty}=\sup_{x\in \Hh}|g(x)|.$ A continuous function $f:[0,\infty)\to [0,\infty)$ is said to be a Bernstein function if $(-1)^{k} f^{(k)}(x)\le 0$ for all $x>0$ and $k\ge 1$. One of our main contributions is as follows.

\begin{theorem}\label{th2} Let $X=(X_t^x)_{t\ge0}$ be the process with components \eqref{th2000} taking values in $\Hh$, i.e.\ \eqref{not2} holds. Suppose that for any $n\ge1$,
the L\'{e}vy measure $\nu_n$ of the
L\'{e}vy
process $(Z_t^n)_{t\ge0}$ on $\R$ satisfies that
\begin{equation}\label{th21}\nu_n(dz)\ge |z|^{-1}f_n(z^{-2})\,dz,\end{equation} where
$f_n$ is a Bernstein function with $f_n(0)=0$ and $\lim\limits_{r\to
\infty}\frac{f_n(r)}{\log (1+r)}=\infty.$ Then, for any $t>0$ and
$x,y\in\Hh$,
\begin{equation}\label{th22}\|\nabla P_t\|_\infty\le  C_t,\end{equation} and
$$\|P_t(x,\cdot)-P_t(y,\cdot)\|_{\var}\le 2C_t\|x-y\|,$$ where
$$C_t:=\sqrt{2\sup_{k\ge1}\bigg[\bigg(\int_0^\infty e^{-\frac{\cos1}{2}tf_k(r)}\,dr\bigg)\wedge\bigg(e^{-\gamma_kt}\int_0^\infty
e^{-\frac{(\cos1)(1-e^{-2\gamma_kt})}{4\gamma_k}f_k(r)}\,dr\bigg)\bigg]},\quad t>0.$$
 \end{theorem}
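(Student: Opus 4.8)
The plan is to deduce the total variation estimate from the gradient estimate \eqref{th22}, and to prove \eqref{th22} by a Mehler‑formula/Cameron–Martin argument, after splitting off from the driving noise a subordinate Brownian motion whose Gaussian character is what makes the differentiation possible. For the reduction: for $g\in B_b(\Hh)$ with $\|g\|_{\Hh,\infty}\le 1$ and $x,y\in\Hh$, the map $s\mapsto P_tg(x+s(y-x))$ on $[0,1]$ has at every point a difference quotient of modulus $\le C_t\|x-y\|$ by \eqref{th22}, hence $|P_tg(x)-P_tg(y)|\le C_t\|x-y\|$; taking the supremum over such $g$ gives $\|P_t(x,\cdot)-P_t(y,\cdot)\|_{\var}\le 2C_t\|x-y\|$, so only \eqref{th22} remains. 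Now $P_tg(x)=\int_{\Hh}g(T_tx+y)\,\mu_t(dy)$ with $\mu_t$ the law of $\int_0^tT_{t-s}\,dZ_s$; under {\bf (H)} and independence of the $Z^n$ this is a product, $\mu_t=\bigotimes_{n\ge1}\mu_t^n$ with $\mu_t^n$ the law of $Y_t^n=\int_0^te^{-\gamma_n(t-s)}\,dZ_s^n$. Since $\nu_n$ is a Lévy measure and $\nu_n(dz)\ge\bar\nu_n(dz):=|z|^{-1}f_n(z^{-2})\,dz$ by \eqref{th21}, $\bar\nu_n$ is itself a Lévy measure, and one may split $Z^n=\tilde Z^n+\hat Z^n$ into independent Lévy processes, $\tilde Z^n$ with Lévy measure $\bar\nu_n$ and no Gaussian part, $\hat Z^n$ carrying $\nu_n-\bar\nu_n\ge0$; thus $Y_t^n=\tilde Y_t^n+\hat Y_t^n$ up to an irrelevant deterministic constant, with $\tilde Y_t^n,\hat Y_t^n$ independent. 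The decisive point is that the symmetric pure‑jump process $\tilde Z^n$ with Lévy density $|z|^{-1}f_n(z^{-2})$ is a subordinate Brownian motion $\tilde Z^n=B^n_{S^n}$, with $B^n$ a Brownian motion and $S^n$ an independent driftless subordinator of Laplace exponent
\begin{equation}\label{pf1}
\Phi_n(\lambda)=\int_{\R}\big(1-\cos(\sqrt{2\lambda}\,z)\big)\,\bar\nu_n(dz),\qquad\lambda\ge0;
\end{equation}
inserting the (driftless) Lévy–Khintchine representation of $f_n$ exhibits the right side of \eqref{pf1} as a superposition of Bernstein functions of $\lambda$. Conditionally on $S^n$, $\tilde Y_t^n$ is centred Gaussian with random variance $V_t^n:=\int_0^te^{-2\gamma_n(t-s)}\,dS_s^n>0$, so given $(V_t^n)_n$ the measure $\mu_t$ is the law of $\hat Y+G$ with $\hat Y=(\hat Y_t^n)_n$ and $G=(G_n)_n$ independent, $G_n\sim N(0,V_t^n)$.

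\emph{Cameron–Martin estimate.} Fix $z\in\Hh$ and $g$ as above. Passing from $x$ to $x+\varepsilon z$ shifts $T_tx$ by $\varepsilon T_tz=(\varepsilon e^{-\gamma_nt}z_n)_n$, an (a.s.\ admissible) Cameron–Martin shift for $G$; differentiating the Gaussian density in $\varepsilon$ and applying Cauchy–Schwarz in $G$ gives
$$\Big|\tfrac{d}{d\varepsilon}\Big|_{\varepsilon=0}\Ee\big[g(T_tx+\varepsilon T_tz+\hat Y+G)\,\big|\,(V_t^n)_n\big]\Big|\le\Big(\sum_{n\ge1}\frac{e^{-2\gamma_nt}z_n^2}{V_t^n}\Big)^{1/2},$$
and then, taking expectations and using Jensen,
$$|\nabla_zP_tg(x)|\le\Big(\sum_{n\ge1}e^{-2\gamma_nt}z_n^2\,\Ee\big[(V_t^n)^{-1}\big]\Big)^{1/2}\le\|z\|\Big(\sup_{n\ge1}e^{-2\gamma_nt}\,\Ee\big[(V_t^n)^{-1}\big]\Big)^{1/2}.$$
It is precisely $\sum_nz_n^2=\|z\|^2$ that turns the coordinate sum into the supremum over $k$ occurring in $C_t$. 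Hence $\|\nabla P_t\|_\infty\le\big(\sup_{n\ge1}e^{-2\gamma_nt}\,\Ee[(V_t^n)^{-1}]\big)^{1/2}$, and the finiteness of $\Ee[(V_t^n)^{-1}]$ — and thus of all the above — will follow from the next bound, which is where $\lim_{r\to\infty}f_n(r)/\log(1+r)=\infty$ is used.

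\emph{Moment bound and conclusion.} As $V_t^n>0$, $(V_t^n)^{-1}=\int_0^\infty e^{-rV_t^n}\,dr$, so by the exponential formula for integrals against $S^n$,
$$\Ee\big[(V_t^n)^{-1}\big]=\int_0^\infty\exp\Big(-\int_0^t\Phi_n\big(re^{-2\gamma_n(t-s)}\big)\,ds\Big)\,dr.$$
Restricting \eqref{pf1} to $\{|z|\le(2\lambda)^{-1/2}\}$, using $1-\cos\theta\ge\tfrac{\cos1}{2}\theta^2$ for $|\theta|\le1$, substituting $r=z^{-2}$ and using that $f_n$ is nondecreasing gives $\Phi_n(\lambda)\ge\tfrac{\cos1}{2}f_n(2\lambda)$; hence $\int_0^t\Phi_n(re^{-2\gamma_n(t-s)})\,ds\ge\tfrac{\cos1}{2}\int_0^tf_n(2re^{-2\gamma_n(t-s)})\,ds$, which is $\ge\tfrac{\cos1}{2}\,t\,f_n(2re^{-2\gamma_nt})$ by monotonicity, and also $\ge\tfrac{\cos1}{2}\big(\int_0^te^{-2\gamma_n(t-s)}ds\big)f_n(2r)=\tfrac{(\cos1)(1-e^{-2\gamma_nt})}{4\gamma_n}f_n(2r)$ by the concavity inequality $f_n(\theta r)\ge\theta f_n(r)$ for $\theta\in[0,1]$ (as $f_n(0)=0$). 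Inserting these and substituting $r\mapsto\tfrac12e^{2\gamma_nt}r$, resp.\ $r\mapsto\tfrac12r$, yields $\Ee[(V_t^n)^{-1}]\le\tfrac12e^{2\gamma_nt}A_n$ and $\Ee[(V_t^n)^{-1}]\le\tfrac12e^{\gamma_nt}B_n$, where $A_n:=\int_0^\infty e^{-\frac{\cos1}{2}tf_n(r)}\,dr$ and $B_n:=e^{-\gamma_nt}\int_0^\infty e^{-\frac{(\cos1)(1-e^{-2\gamma_nt})}{4\gamma_n}f_n(r)}\,dr$ are the two factors in $C_t=\sqrt{2\sup_n(A_n\wedge B_n)}$. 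Thus $e^{-2\gamma_nt}\Ee[(V_t^n)^{-1}]\le\tfrac12(A_n\wedge B_n)$, and together with the previous display this proves \eqref{th22} (with a factor $2$ to spare). The delicate steps — which I expect to be the main obstacle — are (i) realizing $\tilde Z^n$ as a subordinate Brownian motion, i.e.\ verifying carefully that the function in \eqref{pf1} is Bernstein (this is exactly where "$f_n$ Bernstein, $f_n(0)=0$" is used), and (ii) bookkeeping the substitutions so that the constants emerge precisely as in $C_t$; Mehler's formula, the Cameron–Martin differentiation, the Cauchy–Schwarz/Jensen steps, the exponential formula, and the justification of differentiation under the expectation (dominated convergence, via the bounds above) are then routine.
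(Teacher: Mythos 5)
Your argument is correct and reaches the stated constants (in fact with a factor $\tfrac12$ to spare inside the square root), and while it shares the paper's overall architecture --- split off from each $Z^n$ an independent pure-jump part with L\'evy density $|z|^{-1}f_n(z^{-2})$, recognize the corresponding one-dimensional stochastic convolutions as variance mixtures of centred Gaussians, and close with the same $\cos 1$ lower bounds $\Phi_n(\lambda)\ge\tfrac{\cos1}{2}f_n(2\lambda)$ together with $f_n(\theta r)\ge\theta f_n(r)$ --- the central gradient estimate is obtained by a genuinely different device. The paper (Propositions 3.2 and 3.4 in the text) works unconditionally with the explicit mixture density $p_{k,t}(x)=\int_0^\infty(4\pi s)^{-1/2}e^{-x^2/(4s)}\,\mu_{k,t}(ds)$, proves the Fisher-information bound $\int (p'_{k,t})^2/p_{k,t}\le 2\int_0^\infty e^{-F_{k,t}(r)}\,dr$ via the complete-monotonicity inequality $q_t'^2\le 2q_t''q_t$, and then sums the coordinate scores in $L^2(\mu_t^x)$ using the even/odd orthogonality; you instead condition on the subordinators, apply Cameron--Martin and Cauchy--Schwarz to the resulting diagonal Gaussian, and uncondition with Jensen, and the quantity $\Ee[(V_t^k)^{-1}]=\int_0^\infty \Ee[e^{-rV_t^k}]\,dr$ produced by the exponential formula is (up to the normalization of the Brownian motion) exactly the paper's integral $\int_0^\infty e^{-F_{k,t}(r)}\,dr$. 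Your route buys two things: the orthogonality and the $L^2(\mu_t^x)$-convergence of the partial sums of scores are automatic in the Cameron--Martin inner product, and the Lipschitz bound $|P_tg(x+h)-P_tg(x)|\le\tfrac12 C_t\|h\|\,\|g\|_{\Hh,\infty}$ holds directly for all $g\in B_b(\Hh)$, so you do not need the paper's $C_b\to B_b$ extension step. In a final write-up you should spend one sentence each on: (i) the a.s.\ admissibility of the shift, i.e.\ $\sum_n e^{-2\gamma_n t}z_n^2/V_t^n<\infty$ a.s., which follows from the finiteness of its expectation established by your moment bound (the only case of interest, since for $C_t=\infty$ the claim is vacuous); and (ii) the fact that your conditional Cameron--Martin bound is uniform in the centre, so it yields the global Lipschitz estimate by integrating over $\varepsilon\in[0,1]$ --- this is cleaner than, and should replace, the Dini-derivative argument you sketch for the total-variation reduction.
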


The coupling property and gradient estimates have been intensively
studied for linear stochastic differential equations driven by
L\'{e}vy processes on $\R^d$, see e.g.\ \cite{ BSW,SSW, SW1,SW2,W1,W2}. Among them, gradient estimates for linear evolution equations on $\R^d$ have
been obtained in \cite[Theorem 1.1]{W2} by using a similar bound
condition \eqref{th22} on the L\'{e}vy measure, and then they are
improved in \cite[Theorem 1.3]{SSW} and \cite[Theorem 3.1]{SW2} via
the symbol of L\'{e}vy processes. Recently, by using the lower bound conditions for the L\'{e}vy
measure with respect to a nice reference probability measure, we
have successfully obtained the coupling property and gradient
estimates for linear stochastic differential equations driven by
non-cylindrical L\'{e}vy processes on Banach spaces, see
\cite[Theorem 1.2]{WW}. However, as mentioned in the paragraph
before \cite[Theorem 1.1]{WW}, the situation for cylindrical
L\'{e}vy processes is essentially different from that for
non-cylindrical L\'{e}vy processes. This implies that the framework
adopted in \cite{WW} does not apply to the present settings. We stress that Theorem \ref{th2} improves
\cite[Theorem 4.14]{PZ1} for cylindrical symmetric $\alpha$-stable noise, and
also points out the difference form non-cylindrical L\'{e}vy
processes.

As an application of Theorem \ref{th2}, we will study the exponential ergodicity for the stochastic process
$X=(X_t^x)_{t\ge 0}$ with
components \eqref{th2000}. Let $(f_n)_{n\ge1}$ be a sequence of Bernstein functions, and for any $t>0$, let $C_t$ be the constant defined in Theorem \ref{th2}.

\begin{theorem}\label{ergociff}Suppose that for any $n\ge1$, $(Z_t^n)_{t\ge0}$ in \eqref{th2000} is a symmetric pure jump L\'{e}vy process associated with the (symmetric) L\'{e}vy measure $\nu_n$. Assume that the following two assumptions hold:
\begin{itemize}
\item[(i)] There exists a constant $\alpha\in (0,1]$ such that$$\sum_{n=1}^\infty \int_0^\infty\bigg(e^{-2\gamma_n s}\int_{\{|z|\le e^{\gamma_ns}\}}z^2\,\nu_n(dz)+ e^{-\alpha\gamma_ns}\int_{\{|z|>e^{\gamma_ns}\}}|z|^\alpha\,\nu_n(dz)\bigg)\,ds<\infty;$$

\item[(ii)] For any $n\ge1$, $$\nu_n(dz)\ge |z|^{-1}f_n(z^{-2})\,dz,$$ where
$f_n$ is a Bernstein function such that $f_n(0)=0$, $\lim\limits_{r\to
\infty}\frac{f_n(r)}{\log (1+r)}=\infty$ and $\lim\limits_{t\to\infty}C_t=0$.
\end{itemize} Then, the process $X=(X_t^x)_{t\ge 0}$ with
components \eqref{th2000} takes values in $\Hh$, and there exist
the unique invariant measure $\mu$ and a constant $C>0$ such that for any $t>0$ and
$x\in\Hh$,
$$\|P_t(x,\cdot)-\mu\|_{\var}\le  C(1+\|x\|^\alpha)C_t^\alpha.$$
 \end{theorem}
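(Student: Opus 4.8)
The plan is to combine the gradient/coupling estimate of Theorem \ref{th2} with a standard argument showing that the family $\{P_t(x,\cdot)\}$ is Cauchy in total variation as $t\to\infty$, so that the invariant measure is obtained as the limit. First I would check that assumption (i) implies condition \eqref{not2}, so that Theorem \ref{th2} is applicable: indeed, on the time interval $[0,1]$ one has $e^{-\alpha\gamma_n s}\geq e^{-\alpha\gamma_n}$ bounded below uniformly in $s$, and $\int_{\{|z|>e^{\gamma_n s}\}}\nu_n(dz)\le \int_{\{|z|>e^{\gamma_n s}\}}|z|^\alpha\,\nu_n(dz)$ whenever $e^{\gamma_n s}\ge 1$ (hence $|z|\ge 1$ on the domain), so that \eqref{not2} follows from the convergence of the series in (i) restricted to $[0,1]$. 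Thus $X$ takes values in $\Hh$ and \eqref{th22} holds with the constant $C_t$, and by hypothesis $\lim_{t\to\infty}C_t=0$.

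Next I would establish existence of the invariant measure. Write $X_t^x = e^{-\gamma_\bullet t}x + Y_t$ where $Y_t=\big(\int_0^t e^{-\gamma_n(t-s)}dZ_s^n\big)_n$ is the solution started from $0$; by the symmetry of each $\nu_n$ and time-reversal of the stochastic integral, $Y_t$ has the same law as $\widetilde Y_t:=\big(\int_0^t e^{-\gamma_n s}dZ_s^n\big)_n$, and the latter is increasing (in the sense of adding independent increments) as $t\uparrow\infty$. Assumption (i) gives, via the elementary inequality controlling the characteristic function of $\int_0^\infty e^{-\gamma_n s}dZ_s^n$ in terms of $\int_0^\infty\big(e^{-2\gamma_n s}\int_{|z|\le e^{\gamma_n s}}z^2\nu_n(dz)+e^{-\alpha\gamma_n s}\int_{|z|>e^{\gamma_n s}}|z|^\alpha\nu_n(dz)\big)ds$, that $\widetilde Y_\infty:=\lim_{t\to\infty}\widetilde Y_t$ exists almost surely in $\Hh$ (one controls $\Ee\|\widetilde Y_\infty\|^\alpha$ by summing the second contribution and $\Ee(\widetilde Y^n_\infty)^2$ for the "small jump" part); its law $\mu$ is then checked to be invariant for $(P_t)$ by the usual skew-convolution/Mehler semigroup identity $\mu = (T_t\mu)\ast \mu_t$ where $\mu_t=\mathrm{Law}(Y_t)$, letting $t\to\infty$ and using $T_t\mu\to\delta_0$ weakly (since $\gamma_n\to\infty$).

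Finally I would prove the rate. For the convergence: by Theorem \ref{th2}, for any $s,t>0$ and $x\in\Hh$, $\|P_{t+s}(x,\cdot)-P_t(x,\cdot)\|_{\var}=\|P_t\big(P_s(x,\cdot)\big)-P_t(x,\cdot)\|_{\var}\le \int \|P_t(x,\cdot)-P_t(y,\cdot)\|_{\var}\,P_s(x,dy)\le 2C_t\int\|x-y\|\,P_s(x,dy)\le 2C_t\big(\|x\|+\Ee\|Y_s\|\big)$, but to get the $\alpha$-power I would instead interpolate: since $\|P_t(x,\cdot)-P_t(y,\cdot)\|_{\var}\le 2$ always and $\le 2C_t\|x-y\|$, we get $\|P_t(x,\cdot)-P_t(y,\cdot)\|_{\var}\le 2(C_t\|x-y\|)^\alpha$ (using $a\wedge b\le a^\alpha b^{1-\alpha}$ with $b=1$, valid for $\alpha\in(0,1]$). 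Hence $\|P_{t+s}(x,\cdot)-P_t(x,\cdot)\|_{\var}\le 2C_t^\alpha\,\Ee\|x-Y_s'\|^\alpha\le 2C_t^\alpha\big(\|x\|^\alpha+\Ee\|Y_s\|^\alpha\big)\le 2C_t^\alpha\big(\|x\|^\alpha+ M\big)$ where $M:=\sup_{s\ge0}\Ee\|Y_s\|^\alpha=\Ee\|\widetilde Y_\infty\|^\alpha<\infty$ by (i). Letting $s\to\infty$ along this Cauchy estimate yields $\|P_t(x,\cdot)-\mu\|_{\var}\le 2(M+\|x\|^\alpha)C_t^\alpha\le C(1+\|x\|^\alpha)C_t^\alpha$; uniqueness of $\mu$ follows since any two invariant measures $\mu_1,\mu_2$ satisfy $\|\mu_1-\mu_2\|_{\var}=\|P_t\mu_1-P_t\mu_2\|_{\var}\le 2C_t^\alpha\int\!\!\int\|x-y\|^\alpha\mu_1(dx)\mu_2(dy)\to0$.

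The main obstacle I anticipate is the second paragraph: showing carefully that assumption (i) forces $\Ee\|\widetilde Y_\infty\|^\alpha<\infty$ and $\widetilde Y_\infty\in\Hh$ a.s., which requires a clean moment bound for the one-dimensional integrals $\int_0^\infty e^{-\gamma_n s}dZ_s^n$ splitting jumps at the level $e^{\gamma_n s}$ (a sub-additivity/$\alpha$-moment estimate of the type $\Ee|W|^\alpha\lesssim \int(\ldots)$), together with verifying the skew-convolution identity and the invariance of $\mu$ in the infinite-dimensional cylindrical setting; the interpolation giving the $\alpha$-power and the Cauchy argument are then routine.
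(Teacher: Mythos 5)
Your proposal is correct and follows essentially the same route as the paper: apply Theorem \ref{th2}, construct the invariant measure as the law of $\bigl(\int_0^\infty e^{-\gamma_n s}\,dZ_s^n\bigr)_{n\ge1}$, interpolate the total-variation bound $2C_t\|x-y\|$ against the trivial bound $2$ to extract the power $\alpha$, and reduce the rate to the uniform moment bound $\sup_{s>0}\Ee\|Y_s\|^\alpha<\infty$, which you correctly identify as the technical core and for which your sketched small-jump/large-jump splitting at level $e^{\gamma_n s}$ (an $L^2$/Cauchy--Schwarz estimate for the compensated part and $\alpha$-subadditivity for the compound Poisson part) is exactly the paper's argument via the L\'evy--It\^o decomposition. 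The only cosmetic differences are that the paper verifies $\mu(\Hh)=1$ through the product-measure criterion of Proposition \ref{thlevy} rather than through almost sure convergence of $\widetilde Y_t$, and establishes ergodicity and uniqueness before deriving the explicit rate.
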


\bigskip

The remaining part of this paper is organized as follows. In Section
\ref{section2} we present some preliminaries on cylindrical L\'{e}vy
processes. Section
\ref{section4} is devoted to the proof of Theorem \ref{th2}. Here,
the most important estimates for the density function of
one-dimensional subordinate Brownian motions are established in
Proposition \ref{grth1}, which is key to the proof of Theorem
\ref{th2} and indicates the difference from finite dimensional
situations. We also obtain the gradient estimate and the coupling
property for the linear evolution equation driven by cylindrical
subordinate Brownian motions, see Proposition \ref{thsub} below. In
Section \ref{section5}, we will study the exponential ergodicity for
the linear evolution equations \eqref{eq100}. The proof of Theorem
\ref{ergociff} is presented.
\section{Preliminarily: Cylindrical L\'{e}vy Processes on Hilbert Spaces}\label{section2}
In the section, $\Hh$ will denote a real separable Hilbert space
with the inner product $\langle \cdot,\cdot\rangle$ and the norm
$\|\cdot\|$. We will fix an orthonormal basis $(e_n)$ in $\Hh$.
Through the basis $(e_n)$ we will often identify $\Hh$ with $l^2$.
More generally, for a given sequence $\rho=(\rho_n)$ of real
numbers, we set
\begin{equation}\label{p1}l_\rho^2:=\bigg\{(x_n)\in\R^\N: \sum_{n=1}^\infty
x_n^2\rho_n^2<\infty\bigg\}.\end{equation} It is clear that
$l_\rho^2$ becomes a separable Hilbert space with the inner product
$\langle x,y\rangle=\sum\limits_{n=1}^\infty x_ny_n\rho_n^2$ for
$x=(x_n), y=(y_n)\in l_\rho^2.$

Let us recall that a L\'{e}vy
process $Z=(Z_t)_{t\ge0}$ with values in $\Hh$ is an $\Hh$-valued
process defined on some stochastic basis $(\Omega,\mathscr{F},
(\mathscr{F}_t), \Pp)$, continuous in probability, having stationary
independent increments, c\`{a}dl\`{a}g trajectories, and such that
$Z_0=0$, $\Pp$-a.s. It is well known that
$$\Ee e^{i\langle Z_t, x\rangle} =\exp \big(-t\Phi(x)\big),\quad x\in \Hh,$$
where the characteristic exponent (or the symbol) $\Phi$ can be
expressed by the following infinite dimensional L\'{e}vy-Khintchine
formula
\begin{equation*}\label{p2} \Phi(x)=\frac{1}{2}\langle Q x,x\rangle-i\langle b,x\rangle+
\int_{\Hh}\bigg(1-e^{i\langle x,z\rangle}+i\langle
x,z\rangle\I_{\{\|z\|\le 1\}}\bigg)\,\Pi(dz),\quad x\in
\Hh.\end{equation*} Here $Q$ is a non-negative, self-adjoint trace
class operator on $\Hh$, $b\in \Hh$ and $\Pi$ is the L\'{e}vy
measure on $\Hh\backslash \{0\}$ such that $\int_{\Hh\backslash
\{0\}} (1\wedge \|z\|^2)\,\Pi(dz)<\infty.$ We call the triple
$(Q,b,\Pi)$ the characteristics of the L\'{e}vy process $Z$, e.g.\
see \cite[Chapter VI]{Pa} and \cite[Chapter 4]{PZ}.

For cylindrical L\'{e}vy process $Z$ given
by \eqref{eq2}, we assume that $(Z_t^n)_{t\ge0, n\ge1} $ are defined
on the same stochastic basis $(\Omega, \mathscr{F}, (\mathscr{F}_t),
\Pp)$ satisfying the usual assumptions. Since for any $n\ge1$,
$(Z_t^n)_{t\ge0}$ is a pure jump L\'{e}vy process on $\R$, we have,
for each $n\ge1$ and $t\ge0$,
$$\Ee e^{ih Z_t^n}= e^{-t\psi_n(h)},\,\quad h\in \R,$$ where
\begin{equation}\label{levy1}\psi_n(h)=
\int_{\R} \Big( 1-e^{ihz}+ihz\I_{\{|z|\le 1\}}\Big)\, \nu_n(dz),\quad h\in \R, n\ge1,\end{equation}
 and $\nu_n$ is the L\'{e}vy measure on $\R\backslash \{0\}$ with
 $\int_{\R\backslash \{0\}}(1\wedge z^2)\,\nu_n(dz)<\infty,$ e.g.\ see \cite{Sa}.
The following result essentially follows from \cite[Theorem
4.40]{PZ}, which gives us an if and only if condition such that
the cylindrical L\'{e}vy process $Z=(Z_t)_{t\ge0}$ given by
\eqref{eq2} takes values in $\Hh$.
\begin{proposition}\label{thlevy}{$($\cite[Theorem
4.40]{PZ}$)$}\,\,The cylindrical L\'{e}vy process $Z$ given by
\eqref{eq2} takes values in $\Hh$ if and only if
\begin{equation}\label{not0}\sum_{n=1}^\infty \int(1\wedge
z^2)\,\nu_n(dz)<\infty.\end{equation}
\end{proposition}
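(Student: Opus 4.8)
The plan is to prove the two implications separately; both reduce to the L\'{e}vy--It\^{o} bookkeeping behind \cite[Theorem 4.40]{PZ}. Throughout one works coordinatewise: by \eqref{eq2}, $Z_t=\sum_n Z_t^n e_n$ with independent summands, and by \eqref{levy1} each $(Z_t^n)$ is pure jump with L\'{e}vy measure $\nu_n$, no Gaussian component, and small jumps compensated. Hence its L\'{e}vy--It\^{o} decomposition reads $Z_t^n=Z_t^{n,1}+Z_t^{n,2}$ with $Z_t^{n,2}=\sum_{s\le t}\Delta Z_s^n\,\I_{\{|\Delta Z_s^n|>1\}}$ the large-jump part and $Z_t^{n,1}$ the compensated sum of jumps of modulus $\le 1$, a centred $L^2$ random variable with $\Ee (Z_t^{n,1})^2=t\int_{\{|z|\le 1\}}z^2\,\nu_n(dz)$. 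Splitting $1\wedge z^2=z^2\I_{\{|z|\le 1\}}+\I_{\{|z|>1\}}$, one sees that \eqref{not0} is equivalent to the conjunction of $\sum_n\int_{\{|z|\le 1\}}z^2\,\nu_n(dz)<\infty$ and $\sum_n\nu_n(\{|z|>1\})<\infty$.

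For sufficiency, assume \eqref{not0}. The first bound above, together with orthogonality and independence, makes the partial sums $\sum_{n=1}^N Z_t^{n,1}e_n$ an $L^2(\Omega;\Hh)$-bounded martingale in $N$, which therefore converges a.s.\ in $\Hh$ (vector-valued martingale convergence, or the It\^{o}--Nisio theorem); Doob's maximal inequality promotes this to a.s.\ convergence, uniformly on each $[0,T]$. The second bound gives $\sum_n\Pp\big(\exists\, s\le T:\ |\Delta Z_s^n|>1\big)\le T\sum_n\nu_n(\{|z|>1\})<\infty$, so by the Borel--Cantelli lemma only finitely many coordinates carry a large jump before time $T$; hence $t\mapsto\sum_n Z_t^{n,2}e_n$ is a.s.\ a finite sum in $\Hh$ on $[0,T]$. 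Adding the two pieces and letting $T\to\infty$ shows that $Z$ takes values in $\Hh$.

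For necessity, suppose $Z$ takes values in $\Hh$, so the partial sums $S_N(t)=\sum_{n=1}^N Z_t^n e_n$ converge a.s.\ in $\Hh$. Each $S_N$ is a finite-dimensional $\Hh$-valued L\'{e}vy process and this structure passes to the a.s.\ limit, so $Z$ is an $\Hh$-valued L\'{e}vy process; consequently it admits a L\'{e}vy--It\^{o} decomposition whose jump part is a Poisson random measure with intensity $dt\,\Pi(dw)$ for a genuine L\'{e}vy measure $\Pi$ on $\Hh\setminus\{0\}$, i.e.\ $\int_\Hh(1\wedge\|w\|^2)\,\Pi(dw)<\infty$. It remains to identify $\Pi$. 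Since the $(Z_t^n)$ are independent, a.s.\ no two of them jump simultaneously, so every jump of $Z$ has the form $\Delta Z_s=\Delta Z_s^n e_n$ for a single index $n$; comparing jump intensities coordinate by coordinate yields $\Pi(B)=\sum_n \nu_n\big(\{z\in\R:\ ze_n\in B\}\big)$ for Borel sets $B\subset\Hh$ bounded away from $0$, that is, $\Pi$ is carried by the coordinate axes and restricts to $\nu_n$ on the $n$-th axis. Hence $\int_\Hh(1\wedge\|w\|^2)\,\Pi(dw)=\sum_n\int_\R(1\wedge z^2)\,\nu_n(dz)$, and finiteness of the left-hand side is precisely \eqref{not0}.

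The delicate point -- and the place where I would simply invoke \cite[Theorem 4.40]{PZ} -- is the necessity direction: one must justify passing from ``the partial sums converge a.s.\ in $\Hh$'' to ``$Z$ is an $\Hh$-valued L\'{e}vy process with L\'{e}vy measure $\sum_n \nu_n\circ\iota_n^{-1}$'', where $\iota_n(z)=ze_n$. This requires the It\^{o}--Nisio theorem (to have a.s.\ convergence along the whole sequence and a limit process with stationary independent increments and c\`{a}dl\`{a}g paths) together with the description of the jump measure of a superposition of independent real L\'{e}vy processes. One can instead try to argue analytically from $\Ee e^{i\langle Z_t,x\rangle}=\exp\big(-t\sum_n\psi_n(x_n)\big)$ for $x=(x_n)\in\Hh$, using the elementary comparison $c\,(1\wedge z^2)\le 1-\tfrac{\sin z}{z}$ with a universal $c>0$; but test vectors supported on finitely many coordinates lose too much to close the estimate uniformly in the truncation level, so the measure-identification route above is the one to carry out, and it essentially reproduces the content of the cited theorem.
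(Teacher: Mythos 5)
The paper does not prove this proposition at all: it is stated with a direct citation of \cite[Theorem 4.40]{PZ} and used as a black box, so there is no in-paper argument to compare against. Your reconstruction of the argument behind that citation is essentially correct. The sufficiency half is complete: the coordinatewise L\'evy--It\^o split into the compensated small-jump parts (handled by orthogonality, the $L^2$ bound $\sum_n t\int_{\{|z|\le 1\}}z^2\,\nu_n(dz)<\infty$ and the It\^o--Nisio theorem) and the large-jump parts (handled by Borel--Cantelli via $\sum_n\nu_n(\{|z|>1\})<\infty$) is exactly the right decomposition, and it matches the splitting of $1\wedge z^2$. The necessity half is also sound in outline, and you are right to flag the one genuinely delicate step: upgrading a.s.\ convergence of the partial sums to the statement that $Z$ is an honest $\Hh$-valued L\'evy process whose L\'evy measure is the superposition $\sum_n\nu_n\circ\iota_n^{-1}$ carried by the coordinate axes, after which $\int_\Hh(1\wedge\|w\|^2)\,\Pi(dw)<\infty$ gives \eqref{not0}. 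Deferring that step to the cited theorem is no worse than what the paper itself does. If you wanted a self-contained necessity proof avoiding the jump-measure identification, note that $Z_t\in\Hh$ a.s.\ means $\sum_n(Z_t^n)^2<\infty$ a.s., and the Kolmogorov three-series theorem for independent nonnegative summands reduces this to $\sum_n\Ee\big(1\wedge(Z_t^n)^2\big)<\infty$, which can then be compared two-sidedly with $t\int(1\wedge z^2)\,\nu_n(dz)$; that route replaces the infinite-dimensional L\'evy--Khintchine machinery by a one-dimensional estimate, at the cost of proving that comparison uniformly in $n$.
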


As a direct consequence, we have the following example.

\begin{example}\label{thlevy1} Let $Z=(Z_t)_{t\ge0}$ be a cylindrical L\'{e}vy process with the following form
\begin{equation} \label{eq22} Z_t=\sum_{n=1}^\infty \beta_nZ_t^ne_n,\quad t\ge0,\end{equation}
where $(e_n)$ is an orthonormal basis of $\Hh$,
$(Z_t^n)_{t\ge0,n\ge1}$ are independent, real valued, identically
distributed L\'{e}vy processes defined on a fixed stochastic basis,
and $(\beta_n)$ is a given possibly unbounded sequence of positive
numbers. Let $\nu$ be the common L\'{e}vy measure corresponding to
$(Z_t^n)_{t\ge0,n\ge1}$, e.g.\ see \eqref{levy1}. Then, the
cylindrical L\'{e}vy process $Z$ given by \eqref{eq22} takes values
in $\Hh$ if and only if
\begin{equation*}\label{not1}\sum_{n=1}^\infty \bigg(\beta_n^2\int_{\{|z|\le 1/\beta_n\}}z^2\,\nu(dz)+\int_{\{|z|> 1/\beta_n\}}\nu(dz)\bigg)<\infty.\end{equation*}
 \end{example}

Example \ref{thlevy1} covers \cite[Proposition 3.3]{PZ1} for
cylindrical symmetric $\alpha$-stable processes, and also extends
\cite[Proposition 2.4]{PZ2} where for any $n\ge1,$ the symmetry
of the L\'{e}vy process $(Z^n_t)_{t\ge0}$ is required. On the other
hand, according to Proposition \ref{thlevy}, we know that the
cylindrical L\'{e}vy process $Z$ given by \eqref{eq22} is a L\'{e}vy
process with values in the space $l_\rho^2$, see \eqref{p1}, where
$(\rho_n)$ is a sequence of positive numbers such that
$$\sum_{n=1}^\infty \bigg((\rho_n\beta_n)^2\int_{\{|z|\le 1/(\rho_n\beta_n)\}}z^2\,\nu(dz)+\int_{\{|z|> 1/(\rho_n\beta_n)\}}\nu(dz)\bigg)<\infty.$$ That is, with the sequence $(\rho_n)$ above, the cylindrical L\'{e}vy
process $Z$ given by \eqref{eq22} is an honest $l^2_{\rho}$-valued L\'{e}vy process, see \cite[Remark 4.1]{PZ1} and \cite[Remark 2.7]{PZ2}.

Furthermore, as an application of Proposition \ref{thlevy}, we can
provide a criteria such that the process $X=(X_t^x)_{t\ge0}$ with
components \eqref{th2000} takes values in $\Hh$.

\begin{corollary}\label{thex1} The process $X=(X_t^x)_{t\ge0}$ with components \eqref{th2000} takes values in $\Hh$ if and only if \eqref{not2} holds.  Moreover, under
\eqref{not2}, $X=(X_t^x)_{t\ge0}$ is a Markov process, and
$$X_t^x=\sum_{n=1}^\infty X_t^{x,n}e_n=e^{tA}x+Z_A(t),$$ where
$$Z_A(t)=\int_0^te^{(t-s)A}\,dZ_s=\sum_{n=1}^\infty\left(\int_0^te^{-\gamma_n(t-s)}\,dZ_s^n\right)e_n.$$ \end{corollary}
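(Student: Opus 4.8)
The plan is to separate \eqref{th2000} into its deterministic and stochastic parts, to identify the one-dimensional L\'evy measures attached to the stochastic convolution, and then to invoke Proposition \ref{thlevy}; the middle step carries essentially all the content, the rest being bookkeeping.

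First, the decomposition $X_t^x=e^{tA}x+Z_A(t)$ is essentially built into the definitions. By Hypothesis {\bf (H)} one has $e^{tA}x=(e^{-\gamma_nt}x_n)_{n\ge1}$ on $l^2$, while $Z_A(t)$ has $n$-th coordinate $\int_0^te^{-\gamma_n(t-s)}\,dZ^n_s$, so comparison with \eqref{th2000} coordinate by coordinate gives $X_t^x=e^{tA}x+Z_A(t)$ as elements of $\R^\N$. Since $\|e^{tA}x\|^2=\sum_{n\ge1}e^{-2\gamma_nt}x_n^2\le\|x\|^2<\infty$, for each fixed $t\ge0$ we have $X_t^x\in\Hh$ almost surely if and only if $Z_A(t)\in\Hh$ almost surely, and the problem reduces to deciding, for $t>0$, when $Z_A(t)$ is an $\Hh$-valued random variable.

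Second, I would determine the law of $Z_A(t)$ for fixed $t>0$. Its coordinates $Y_t^n:=\int_0^te^{-\gamma_n(t-s)}\,dZ^n_s$ are independent, the processes $(Z^n)$ being so, and each $Y_t^n$ is infinitely divisible with
\[
\Ee e^{ihY_t^n}=\exp\Big(-\int_0^t\psi_n\big(he^{-\gamma_n(t-s)}\big)\,ds\Big)=\exp\Big(-\int_0^t\psi_n\big(he^{-\gamma_nu}\big)\,du\Big),\qquad h\in\R,
\]
by \eqref{levy1} and the substitution $u=t-s$. Applying the further substitution $w=e^{-\gamma_nu}z$ inside \eqref{levy1} identifies the L\'evy measure $\mu_t^n$ of $Y_t^n$ as the measure on $\R\setminus\{0\}$ determined by $\int_\R g\,d\mu_t^n=\int_0^t\!\int_\R g\big(e^{-\gamma_nu}z\big)\,\nu_n(dz)\,du$ for every Borel $g:\R\to[0,\infty]$. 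Taking $g(w)=1\wedge w^2$ and splitting over $\{|z|\le e^{\gamma_nu}\}$ and its complement gives
\[
\int_\R(1\wedge w^2)\,\mu_t^n(dw)=\int_0^t\Big(e^{-2\gamma_nu}\int_{\{|z|\le e^{\gamma_nu}\}}z^2\,\nu_n(dz)+\int_{\{|z|>e^{\gamma_nu}\}}\nu_n(dz)\Big)\,du=:\int_0^th_n(u)\,du,
\]
and since $u\mapsto 1\wedge(e^{-\gamma_nu}z)^2$ is non-increasing for each $z$, each $h_n$ is non-increasing on $[0,\infty)$.

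Third, I would conclude. The random variable $Z_A(t)=\sum_{n\ge1}Y_t^ne_n$ is infinitely divisible with independent coordinates whose L\'evy measures are the $\mu_t^n$, hence it has the same distribution as $\widehat Z_1$, where $\widehat Z=(\widehat Z_s)_{s\ge0}$, $\widehat Z_s=\sum_{n\ge1}\widehat Z^n_se_n$, is a cylindrical L\'evy process of the form \eqref{eq2} whose coordinates are pure jump L\'evy processes with L\'evy measures $\mu_t^n$ and with drifts chosen so that $\widehat Z^n_1$ has the law of $Y_t^n$. By Proposition \ref{thlevy}, $\widehat Z$ takes values in $\Hh$---equivalently, $Z_A(t)\in\Hh$ almost surely---if and only if $\sum_{n\ge1}\int_\R(1\wedge w^2)\,\mu_t^n(dw)<\infty$, that is, if and only if $\sum_{n\ge1}\int_0^th_n(u)\,du<\infty$. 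Because each $h_n$ is non-increasing, $\int_0^th_n(u)\,du\le\big(1\vee(t/s)\big)\int_0^sh_n(u)\,du$ for all $s,t>0$, so this condition is independent of $t>0$, and for $t=1$ it is precisely \eqref{not2}; this proves the stated equivalence. Finally, under \eqref{not2} the coordinate processes $(X_t^{x,n})_{t\ge0}$ are independent one-dimensional L\'evy driven Ornstein--Uhlenbeck processes, so $(X_t^x)_{t\ge0}$ is a Markov process in $\Hh$ with the asserted representation; here I would simply refer to \cite{Ch,PZ}. The step requiring the most care is the identification of $\mu_t^n$ together with the reduction of the $\Hh$-valuedness of the stochastic convolution to the cylindrical criterion of Proposition \ref{thlevy}; everything else is routine.
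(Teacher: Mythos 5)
Your proof is correct and follows the same core strategy as the paper: identify the L\'evy measure $\mu_t^n$ (the paper's $\nu_{n,t}$) of the stochastic convolution $Y_t^n=\int_0^te^{-\gamma_n(t-s)}\,dZ_s^n$, compute $\sum_n\int(1\wedge w^2)\,\mu_t^n(dw)$ as the double integral appearing in \eqref{not2}, and feed this into Proposition \ref{thlevy}. Where you genuinely diverge is in passing from $t=1$ to arbitrary $t>0$. The paper handles $t>1$ by the semigroup identity $Y_t-e^{(t-1)A}Y_1=\int_0^{t-1}e^{(t-1-u)A}\,dZ_u^1$ with $Z^1_u=Z_{1+u}-Z_1$, observing that the right-hand side is an independent copy of $Y_{t-1}$, and iterating; this occupies most of its proof. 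You instead observe that $h_n(u)=\int_\R\big(1\wedge(e^{-\gamma_nu}z)^2\big)\,\nu_n(dz)$ is non-increasing in $u$ (since $\gamma_n>0$), so that $t\mapsto\frac1t\int_0^th_n$ is non-increasing and hence $\sum_n\int_0^th_n\le(1\vee t)\sum_n\int_0^1h_n$; the finiteness condition is therefore $t$-independent in one line. This is cleaner and more elementary, and it avoids the probabilistic iteration entirely, at the (mild) cost of being specific to the diagonal/monotone structure of Hypothesis {\bf(H)}, whereas the paper's identity-based argument would survive more general semigroups. Two small remarks: both you and the paper apply Proposition \ref{thlevy}, which is stated for pure jump coordinates, to marginals whose triplets carry a drift $b_t$ (you acknowledge the drift, the paper records it and then ignores it), so you are at the same level of rigor there; and your Markov-property step is more cursory than the paper's, which exhibits the explicit identity $Y_t-e^{(t-s)A}Y_s=\int_0^{t-s}e^{(t-s-u)A}\,dZ_u^s$ --- worth adding, since it costs one line.
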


When the L\'{e}vy process $(Z_t^n)_{t\ge0}$ is square integrable
with zero mean, Corollary \ref{thex1} has been proved in
\cite[Proposition 9.7]{PZ}; when the L\'{e}vy process
$(Z_t^n)_{t\ge0}$ is symmetric,  Corollary \ref{thex1} is just
\cite[Theorem 2.8]{PZ2}. For the general case, we refer to
\cite[Corollary 6.3]{Ri} for the recent study. Below we include the
proof of Corollary \ref{thex1} for the sake of completeness.

\begin{proof}[Proof of Corollary $\ref{thex1}$]
For any $n\ge1$ and $t\ge0$, let us consider the stochastic convolution
$$Y_t^n=\int_0^te^{-\gamma_n(t-s)}\,dZ_s^n.$$ The law of $Y_t^n$ is an infinitely divisible probability distribution, and its characteristic exponent is
   $$-\log(\Ee(e^{ihY_t^n}))=\int_0^t\psi_n(e^{-\gamma_ns}h)\,ds,\quad t\ge0,h\in\R,$$ where $\psi_n$ is given by
\eqref{levy1}, e.g.\ see \cite[Corollary 4.29]{PZ} or \cite[Lemma 3.1]{SW2}.
    Since the driving L\'evy process $(Z^n_t)_{t\ge0}$ is a pure jump L\'{e}vy process, the L\'evy {triplet} $(0,b_t,\nu_{n,t})$ of $Y_t^n$ is given by, e.g.\ see \cite[Theorem 3.1]{SAT} and \cite[Proposition 2.1]{Mu},
    \begin{gather*}
        \nu_{n,t}(D)= \int_0^t\nu_n(e^{-s\gamma_n}D)\,ds,\qquad D\in\mathscr{B}(\R\setminus\{0\}),\\
        b_t= \int_{z\neq 0} \int_0^t e^{s\gamma_n}z\Big(\I_{\{|z|\le 1\}} \big(e^{s\gamma_n}z\big) - \I_{\{|z|\le1\}}(z) \Big)\,ds\,\nu_n(dz),
    \end{gather*} where $\nu_n$ is the L\'evy measure of $\psi_n$ given in \eqref{levy1}.
Therefore,
\begin{equation}\label{profthex11}\aligned &\sum_{n=1}^\infty \int (1\wedge z^2)\,\nu_{n,t}(dz)\\
&=\sum_{n=1}^\infty \int_0^t\left(\int_\R(1\wedge(e^{-s\gamma_n}z)^2)\,\nu_n(dz)\right)\,ds\\
&=\sum_{n=1}^\infty \int_0^t\bigg(e^{-2\gamma_n s}\int_{\{|z|\le e^{\gamma_ns}\}}z^2\,\nu_n(dz)+ \int_{\{|z|>e^{\gamma_ns}\}}\,\nu_n(dz)\bigg)\,ds.\endaligned\end{equation}

Assumption \eqref{not2} gives us that $\Pp(X_1^x\in \Hh)=1$, due to \eqref{profthex11} and \eqref{not0}.
So, in order to complete the proof, it remains to show that $\Pp(X_1^x\in
\Hh)=1$ implies $\Pp(X_t^x\in \Hh)=1$ for any $t>0$. Since \eqref{not2} verifies that
$$\sum_{n=1}^\infty \int_0^t\bigg(e^{-2\gamma_n s}\int_{\{|z|\le e^{\gamma_ns}\}}z^2\,\nu_n(dz)+ \int_{\{|z|>e^{\gamma_ns}\}}\,\nu_n(dz)\bigg)\,ds<\infty,\quad 0<t\le1,$$ also by \eqref{not0}, for any $0<t\le 1$, $\Pp(X_t^x\in \Hh)=1$.

For any $t>0$, set
$Y_t=\big(Y_t^n\big)_{n\ge1}$. Then, $$Y_t=\sum_{n=0}^\infty\left(\int_0^te^{-\gamma_n(t-u)}\,dZ_u^n\right)e_n=:\int_0^te^{(t-u)A}\,dZ_u.$$ For any $t>1$, we have the following identity on the product space
$\R^{\N}$:
$$\aligned
Y_t-e^{(t-1)A}Y_1&=\int_0^te^{(t-u)A}\,dZ_u-e^{(t-1)A}\int_0^1e^{(1-u)A}\,dZ_u\\
&=\int_0^te^{(t-u)A}\,dZ_u-\int_0^1e^{(t-u)A}\,dZ_u\\
&=\int_1^te^{(t-u)A}\,dZ_u\\
&=\int_0^{t-1}e^{(t-1-u)A}\,dZ_u^1,\endaligned$$ where
$Z_u^1=Z_{1+u}-Z_1$, $u\ge0$, is still a L\'{e}vy process with values in
$\R^{\N}$. Note that $\int_0^{t-1}e^{(t-1-u)A}\, dZ_u^1$ has the
same law as $Y_{t-1}$. Since $\Pp(Y_{t-1}\in \Hh)=1$ for any $1<t\le 2$
and $\Pp(Y_1\in\Hh)=1$, we have, by the identity above, $\Pp(Y_t\in \Hh)=1$ for any $1<t\le 2.$ Using an
iteration produce, we furthermore infer that $\Pp(Y_t\in \Hh)=1$ for any
$t\ge0$. This proves the required assertion.

The Markov property of $X$ follows from the identity below
$$Y_t-e^{(t-s)A}Y_s= \int_0^{t-s}e^{(t-s-u)A}\,dZ_u^s,\quad 0<s\le t,$$ where $Z_u^s=Z_{s+u}-Z_s$.
 \end{proof}

We close this section with two remarks and one example for Corollary \ref{thex1}.

\begin{remark} (1) According to \cite[Theorem 2.3]{Ch} or
\cite[Theorem 7]{App}, if the L\'{e}vy process $Z$ takes values in
$\Hh$, then the linear equation \eqref{eq100} has the unique
mild solution $X$ which also takes values in $\Hh$. Thus,
it is not surprising to see that \eqref{not0} implies
\eqref{not2}.

(2) If the
cylindrical L\'{e}vy process $Z$ given by \eqref{eq2} takes values
in the Hilbert space $\Hh$, i.e.\ the condition \eqref{not0} holds,
then, by the Kotelenez regularity result (see \cite[Theorem
9.20]{PZ}), trajectories of the process $X$ which solves
\eqref{eq100} are c\`{a}dl\`{a}g with values in $\Hh$, e.g.\ see
\cite[the first paragraph in Section 4.1]{PZ1} and \cite[Remark
2.10]{PZ2}. On the other hand, assume that the process $X$ which
solves \eqref{eq100} has $\Hh$-c\`{a}dl\`{a}g modification, where
the L\'{e}vy process $Z$ is defined by \eqref{eq2}. Then, according
to (the proof of) \cite[Theorem 2.1]{LZ}, for any $\varepsilon>0$,
$$\sum_{n=1}^\infty \int_{\{|z|\ge \varepsilon\}}\,\nu_n(dz)<\infty,$$ where for any $n\ge1$, $\nu_n$ is the L\'{e}vy measure corresponding to the pure jump L\'{e}vy process $(Z_t^n)_{t\ge0}$.
\end{remark}

\begin{example}[Continuation of Example \ref{thlevy1}] Let $X$ be the process with components \eqref{th2000}, where $Z$ is the cylindrical L\'{e}vy process given by \eqref{eq22}. Then, the process $X$ takes values in $\Hh$ if and only if
\begin{equation}\label{not3}\sum_{n=1}^\infty \int_0^1\bigg(\beta_n^2e^{-2\gamma_n s}\int_{\{|z|\le \beta_n^{-1}e^{\gamma_ns}\}}z^2\,\nu(dz)+ \int_{\{|z|>\beta_n^{-1}e^{\gamma_ns}\}}\,\nu(dz)\bigg)\,ds<\infty.\end{equation}  \end{example}

\section{Coupling Property and Gradient Estimates}\label{section4}
\subsection{One-dimensional Ornstein-Uhlenbeck Processes Driven by Subordinate Brownian Motions}
In this subsection, we will consider one-dimensional
Ornstein-Uhlenbeck processes driven by subordinate Brownian
motions, which are a class of special but important L\'{e}vy
processes.

We first recall some facts and properties for subordinate Brownian motions. Suppose that $(B_t)_{t\ge0}$ is a Brownian motion on $\R$ with
$$\Ee\Big[e^{ih(B_t-B_0)}\Big]=e^{-th^2},\qquad h\in\R,
t>0,$$ and $(S_t)_{t\ge0}$ is a subordinator (that is,
$(S_t)_{t\ge0}$ is a nonnegative L\'{e}vy process on $[0,\infty)$
such that $S_t$ is increasing and right-continuous in $t$ with
$S_0=0$) independent of $(B_t)_{t\ge0}$. For any $t\ge0$, let
$\mu_t^S$ be the probability distribution of the subordinator $S$,
i.e.\ $\mu_t^S(D)=\Pp(S_t\in D)$ for any $D\in\Bb([0,\infty))$. It
is well known that the associated Laplace transformation of
$\mu_t^S$ for each $t\ge0$ is given by
$$\int_0^\infty e^{-r s}\mu_t^S(ds)=e^{-tf(r)},\qquad r>0,$$
where $f(r)$ is a Bernstein function, i.e.\ $(-1)^{k} f^{(k)}(r)\le 0$ for all $r>0$ and $k\ge 1$. We refer to \cite{SSZ}
for more details about Bernstein functions and subordinators. The
process $Z=(Z_t)_{t\ge0}$ on $\R$ defined by $$Z_t=B_{S_t},\quad
t\ge0$$is called subordinate Brownian motion, which is a symmetric
L\'{e}vy process with
$$\Ee\Big[e^{ih(Z_t-Z_0)}\Big]=e^{-tf(h^2)},\qquad h\in\R,
t>0.$$ That is, the symbol or the characteristic exponent of the
subordinate Brownian motion $(Z_t)_{t\ge0}$ is $f(h^2)$, see
\cite{JBOOK}. The transition density function of the subordinated
Brownian motion $Z$ exists, and it is given by
\begin{equation}\label{density}p^Z_t(x,y)=p^Z_t(x-y)=\int_0^\infty \frac{1}{\sqrt{4\pi
s}}\exp\bigg(-\frac{|x-y|^2}{4s}\bigg)\,\mu^S_t(ds)\end{equation} for $t>0$ and $x,y\in\R.$
 Examples of subordinate Brownian motions include
symmetric $\alpha$-stable processes, relativistic
$\alpha$-stable processes and so on.

\medskip

Now, we will consider the process $X=(X_t)_{t\ge0}$ given by
\begin{equation}\label{gr1} X_t^x=e^{-\gamma t}x+\int_0^t e^{-\gamma(t-s)}\,dZ_s,\quad t\ge0, x\in\R,\end{equation}
where $\gamma\ge 0$ and $(Z_t)_{t\ge0}$ is a subordinated Brownian
motion on $\R$ associated with the Bernstein function $f$.

The following result is the key to the proof of Theorem \ref{th2}.
Denote by $C_b^\infty(\R)$ the set of all infinite differentiable
functions on $\R$ such that all their derivatives are bounded.
\begin{proposition}\label{grth1} If \begin{equation}\label{grth11}\lim_{r\to\infty}\frac{
f(r)}{\log(1+r)}=\infty,\end{equation} then the Ornstein-Uhlenbeck
process $(X^x_t)_{t\ge0}$ given by \eqref{gr1} has a density
function of the form $p_t(\cdot -e^{-\gamma_nt}x)$. Here, for any
$t>0$, the function $p_t$ enjoys the following properties:
\begin{itemize}
\item[(i)] $p_t>0$ and $p_t\in C_b^\infty(\R)$ such that $p_t$ is even
on $\R$, $p'_t$ is odd on $\R$.

\item[(ii)]\begin{equation}\label{grth2}\aligned \int_{\R} \frac{(p'_t(x))^2}{p_t(x)}\,dx &\le 2\int_0^\infty
e^{-F_t(r)}\,dr,
      \endaligned\end{equation}where for any $t,r>0$,
$$F_t(r)=\int_0^t f(e^{-2\gamma s }r)\,ds.$$
\end{itemize}
\end{proposition}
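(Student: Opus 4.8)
The plan is to realize $(X_t^x)_{t\ge0}$ as a subordinate Ornstein--Uhlenbeck process in a way that makes the density explicit, and then estimate the Fisher-type quantity $\int (p'_t)^2/p_t$ by a direct computation in Gaussian coordinates. First I would use the representation $Z_t = B_{S_t}$ with $(S_t)$ the subordinator attached to $f$. Plugging this into \eqref{gr1} and conditioning on the path of $(S_t)$, one sees that $X_t^x - e^{-\gamma t}x = \int_0^t e^{-\gamma(t-s)}\,dB_{S_s}$ is, conditionally on $(S_s)_{s\le t}$, a centered Gaussian random variable; its conditional variance is $\sigma_t^2 := \int_0^t e^{-2\gamma(t-s)}\,dS_s = \int_0^t e^{-2\gamma u}\,dS_{t-u}$ (the stochastic integral against $dB_{S_s}$ being a time-changed Brownian integral). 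Hence $X_t^x$ has density $p_t(\cdot - e^{-\gamma t}x)$ with
\begin{equation*}
p_t(y) = \Ee\left[\frac{1}{\sqrt{2\pi\sigma_t^2}}\exp\left(-\frac{y^2}{2\sigma_t^2}\right)\right],
\end{equation*}
a mixture of centered Gaussians. This immediately gives part (i): a nontrivial mixture of centered Gaussians is strictly positive, smooth with all derivatives bounded (differentiation under the expectation is justified because the Gaussian kernel and its derivatives are dominated uniformly once we know $\sigma_t^2>0$ with probability one, which follows from \eqref{grth11} forcing the subordinator to be genuinely active), even in $y$, and $p_t'$ odd. The only subtlety here is confirming $\sigma_t^2>0$ a.s.\ and that the mixing law has no atom at $\sigma_t^2=0$; this is where \eqref{grth11} first enters, since $f$ not being eventually logarithmic rules out $f\equiv 0$ and in fact forces the subordinator to have infinite activity or positive drift, so $S_t>0$ a.s.\ for $t>0$.

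For part (ii), the natural route is to bound $\int_\R (p_t'(y))^2/p_t(y)\,dy$ by convexity. Writing $p_t(y) = \int g_{\sigma^2}(y)\,\Lambda_t(d\sigma^2)$ where $g_{v}(y) = (2\pi v)^{-1/2}e^{-y^2/(2v)}$ and $\Lambda_t$ is the law of $\sigma_t^2$, one has $p_t'(y) = \int g_{v}'(y)\,\Lambda_t(dv)$. Applying the Cauchy--Schwarz inequality in the form
\begin{equation*}
\frac{(p_t'(y))^2}{p_t(y)} = \frac{\left(\int g_v'(y)\,\Lambda_t(dv)\right)^2}{\int g_v(y)\,\Lambda_t(dv)} \le \int \frac{(g_v'(y))^2}{g_v(y)}\,\Lambda_t(dv),
\end{equation*}
which is exactly the joint convexity of $(a,b)\mapsto a^2/b$, and integrating in $y$ with Tonelli, we get
\begin{equation*}
\int_\R \frac{(p_t'(y))^2}{p_t(y)}\,dy \le \int \left(\int_\R \frac{(g_v'(y))^2}{g_v(y)}\,dy\right)\Lambda_t(dv) = \int \frac{1}{v}\,\Lambda_t(dv) = \Ee\left[\frac{1}{\sigma_t^2}\right],
\end{equation*}
since the Fisher information of a centered Gaussian of variance $v$ is $1/v$. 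So the whole problem reduces to showing $\Ee[1/\sigma_t^2] \le 2\int_0^\infty e^{-F_t(r)}\,dr$.

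The final step, which I expect to be the main obstacle, is the estimate of $\Ee[1/\sigma_t^2]$. I would use $1/\sigma_t^2 = \int_0^\infty e^{-r\sigma_t^2}\,dr$ and Tonelli to write $\Ee[1/\sigma_t^2] = \int_0^\infty \Ee[e^{-r\sigma_t^2}]\,dr$. Now $\sigma_t^2 = \int_0^t e^{-2\gamma u}\,dS_{t-u}$; the key point is to lower-bound $\sigma_t^2$ below by something whose Laplace transform is computable. One clean choice: since $e^{-2\gamma u}\ge e^{-2\gamma t}$ on $[0,t]$ we get $\sigma_t^2 \ge e^{-2\gamma t} S_t$ giving the bound involving $e^{-\gamma_k t}\int_0^\infty e^{-\cdots f_k(r)}\,dr$ after the substitution $r\mapsto e^{2\gamma t}r$ and using $\Ee e^{-rS_t}=e^{-tf(r)}$ — but to hit the sharper constant $\frac{\cos 1}{2}$ and the form $F_t$, one must instead handle the mixture more carefully, controlling $\Ee[e^{-r\sigma_t^2}]$ by decomposing $[0,t]$ and using the independent-increments / stationarity of $(S_u)$ together with a Jensen- or elementary inequality of the type $e^{-a}\le \text{(something)}$ valid on a bounded range, which is presumably where the numerical constant $\cos 1$ is born. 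Concretely I expect one writes $\Ee[e^{-r\sigma_t^2}]\le e^{-c\int_0^t f(e^{-2\gamma s}r)\,ds}=e^{-cF_t(r)}$ for an explicit $c$, reducing matters to a Laplace-transform identity for additive functionals of the subordinator; matching $c$ with $\cos1$ and absorbing the factor $2$ will require the truncation/localization argument that must also interface with the lower bound \eqref{th21} on $\nu_n$ (i.e.\ with the relation $f(h^2)\le$ symbol of the $\nu_n$-process) — that interplay is the genuinely delicate part and the place where \eqref{grth11} is used a second time, to guarantee $\int_0^\infty e^{-F_t(r)}\,dr<\infty$ so the bound is not vacuous.
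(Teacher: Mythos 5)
Your route is genuinely different from the paper's and it can be made to work, but as written it has a real gap at the step you yourself flag as "the main obstacle", and your guess about how to close it points in the wrong direction. The missing fact is an exact identity, not an inequality with a mysterious constant: for a subordinator $S$ with Laplace exponent $f$ and a deterministic nonnegative integrand, $\Ee\exp\bigl(-r\int_0^t e^{-2\gamma(t-s)}\,dS_s\bigr)=\exp\bigl(-\int_0^t f(re^{-2\gamma(t-s)})\,ds\bigr)=e^{-F_t(r)}$ (Riemann-sum approximation using independence and stationarity of the increments of $S$, or the L\'evy--Khintchine formula for the additive functional). There is no $\cos 1$, no truncation, and no interplay with \eqref{th21} in this proposition: \eqref{th21} is a hypothesis of Theorem \ref{th2}, and the constant $\frac{\cos 1}{2}$ is born later, in the passage from a general L\'evy measure to a subordinate-Brownian lower bound inside the proof of that theorem. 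With the identity in hand, your chain closes immediately: $\Ee[1/\sigma_t^2]=\int_0^\infty \Ee[e^{-r\sigma_t^2}]\,dr=\int_0^\infty e^{-F_t(r)}\,dr$, and after tracking the variance normalization (the paper's $B$ satisfies $\Ee e^{ihB_t}=e^{-th^2}$, so the conditional variance is $2\sigma_t^2$ and the conditional Fisher information is $1/(2\sigma_t^2)$) you land within the stated bound $2\int_0^\infty e^{-F_t(r)}\,dr$, in fact with a better constant. There is a second, smaller gap in part (i): dominating $\partial_y^n g_v(y)$ uniformly requires $\sup_y|\partial_y^n g_v(y)|\le C_n v^{-(n+1)/2}$ to be integrable against the mixing law, i.e.\ you need all negative moments $\Ee[\sigma_t^{-k}]<\infty$, which "$\sigma_t^2>0$ a.s." does not give. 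These moments are exactly what \eqref{grth11} supplies, via $\Ee[\sigma_t^{-2k}]=\Gamma(k)^{-1}\int_0^\infty r^{k-1}e^{-F_t(r)}\,dr<\infty$ because $F_t(r)/\log(1+r)\to\infty$.

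For comparison, the paper avoids both the convexity inequality and the conditional-variance computation. It observes that $F_t$ is itself a Bernstein function, writes $p_t(x)=q_t(x^2)$ with $q_t$ completely monotone, and uses the elementary inequality $q_t'(x)^2=-2\int_x^\infty q_t'q_t''\,du\le 2q_t''(x)q_t(x)$ (valid since $q_t''$ is decreasing) to reduce $\int(p_t')^2/p_t$ to $8\int_0^\infty\sqrt{x}\,q_t''(x)\,dx$, which it evaluates explicitly and converts to $\int_0^\infty e^{-F_t(r)}\,dr$ by the Laplace-transform identity $\int s^{-1}\mu_t(ds)=\int_0^\infty e^{-F_t(r)}\,dr$; the smoothness in part (i) is delegated to the Knopova--Schilling existence theorem under the Hartman--Wintner-type condition. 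Your Fisher-information/convexity argument is arguably more transparent and gives a sharper constant, but you must supply the two ingredients above to make it a proof.
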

\begin{proof}
Let us first consider the stochastic convolution
$$Y_t=\int_0^t e^{-\gamma(t-s)}\,dZ_s,\quad t\ge0.$$ A direct calculation shows for any $h\in\R$ and $t\ge0$,
$$\Ee e^{-ihY_t}=\exp\bigg(-\int_0^t f(e^{-2\gamma s}h^2)\,ds\bigg)=\exp\bigg(-F_t(h^2)\bigg),$$
e.g.\ see \cite[Corollary 4.29]{PZ} or \cite[Lemma 3.1]{SW2}.

Since $r\mapsto f(e^{-2\gamma s }r)$ is a Bernstein function for every $s>0$ and
$\gamma\ge0$, it follows from
\cite[Corollary 3.7]{SSZ} that $F_t$ is also a Bernstein function.
Then, there exists a probability measure $\mu_t$ on $[0,\infty)$
such that its Laplace transformation
$$\int_0^\infty e^{-r s}\,\mu_t(ds)=e^{-F_t(r)},\,\quad r\ge0.$$

Furthermore, due to \eqref{grth11}, we have
$$\lim_{r\to\infty}\frac{F_t(r)}{\log(1+r)}=\infty\quad \textrm{ for all }
t>0.$$ According to (the proof of) \cite[Theorem 1]{SV}, we know
that for all $t>0$ there exists a density function $p_t$ for $Y_t$
such that $p_t\in C_b^\infty(\R)$, and $p_t^{(n)}$ exists and
belongs to $ L_1(\R)\cap C_\infty(\R)$ for all $n\ge0.$ By
\eqref{gr1}, we know that for any $t>0$, the density function of
$X_t^x$ exists and is equivalent to $p_t(\cdot -e^{-\gamma t}x).$ On
the other hand, as mentioned above, $F_t(r)$ is a Bernstein
function. Therefore, according to \eqref{density},
$$p_t(x)=\int_0^\infty \frac{1}{\sqrt{4\pi s}}e^{-\frac{x^2}{4s}}\,\mu_t(ds),\quad x\in\R.$$
Clearly, $p_t(x)>0$ for all $x\in\R$, $p_t(x)$ is even and $p'_t(x)$
is odd. In what follows, we write $p_t(x)=q_t(x^2)$ for all
$x\in\R$, where
$$q_t(x) =\int_0^\infty \frac{1}{\sqrt{4\pi
s}}e^{-\frac{x}{4s}}\,\mu_t(ds).$$ It is easy to see that for any
$n\ge1$, $q_t^{(n)}$ exists such that for any $x>0$,
$q_t^{(2n-1)}(x)<0$ and $q_t^{(2n)}(x)>0$. Therefore,
$$\aligned \int_{\R} \frac{(p'_t(x))^2}{p_t(x)}\,dx &=2\int_0^\infty \frac{(p'_t(x))^2}{p_t(x)}\,dx=4\int_0^\infty \frac{\sqrt{x}q_t'(x)^2}{q_t(x)}\,dx .\endaligned$$
Since, by $q(+\infty)=q'(+\infty)=0$, $q'_t(x)<0$ and
$q^{(3)}_t(x)<0$ for all $x>0$,
$$q'_t(x)^2=-2\int_{x}^\infty q'_t(u)q''_t(u)\,du\le
2q''_t(x)\bigg(-\int_x^\infty q'_t(u)\,du\bigg)=2 q''_t(x)q_t(x),$$
we arrive at
$$\int_{\R} \frac{(p'_t(x))^2}{p_t(x)}\,dx \le 8\int_0^\infty \sqrt{x} q''_t(x)\,dx.$$
Noting that
$$q''_t(x)=\frac{1}{32\sqrt{\pi}}\int_0^\infty \frac{1}{s^{5/2}}e^{-\frac{x}{4s}}\,\mu_t(ds),$$ we further get
$$\aligned\int_{\R} \frac{(p'_t(x))^2}{p_t(x)}\,dx\le& \frac{1}{4\sqrt{\pi}}\int_0^\infty \sqrt{x}\int_0^\infty
\frac{1}{s^{5/2}}e^{-\frac{x}{4s}}\,\mu_t(ds)\,dx\\
=& \frac{2}{\sqrt{\pi}}\int_0^\infty
\sqrt{r}e^{-r}\,dr\int_0^\infty \frac{1}{s}\mu_t(ds)\\
=&2\int_0^\infty \frac{1}{s}\mu_t(ds)\\
=&2\int_0^\infty\int_0^\infty e^{-r s}\, dr\,\mu_t(ds)\\
=&2\int_0^\infty \,dr\int_0^\infty e^{-rs}\,\mu_t(ds)\\
=&2\int_0^\infty e^{-F_t(r)}\,dr.\endaligned$$ This proves
\eqref{grth2}. The proof is completed.
\end{proof}

\subsection{Ornstein-Uhlenbeck Processes on $\Hh$ Driven by Cylindrical
Subordinated Brownian Motions} In this part, let $X$ be the process with components \eqref{th2000}, where for any $n\ge1$, $(Z_t^n)_{t\ge0}$ is a subordinated Brownian motion
associated with the Bernstein function $f_n$ satisfying
$$\lim_{r\to \infty}\frac{ f_n(r)}{\log (1+r)}=\infty.$$ For simplicity, we call $X$
the Ornstein-Uhlenbeck process driven by
additive cylindrical subordinate Brownian motions.

We will drive explicit estimates about the uniform norm of the
gradient for the semigroup $(P_t)_{\ge0}$ corresponding to linear
evolution equations driven by additive cylindrical subordinate
Brownian motions. Recall that the uniform norm of the gradient for
the semigroup $(P_t)_{t\ge0}$ is defined by
$$\|\nabla P_t\|_\infty:=\sup\bigg\{ |\nabla_zP_tg(x)|: \|z\|\le 1, x\in \Hh, g\in B_b(\Hh)\textrm{ with } \|g\|_{\Hh,\infty}\le1\bigg\},$$ where
$$|\nabla_zP_tg(x)|:=\limsup_{\varepsilon\to 0}\frac{1}{\varepsilon}|P_tg(x+\varepsilon z)-P_tg(x)|.$$

\begin{proposition}\label{thsub} Let $X$ be the Ornstein-Uhlenbeck process driven by
additive cylindrical subordinate Brownian motions as above, such that it takes values in $\Hh$. Let $(P_t)_{\ge0}$ and
$P_t(x,\cdot)$ be its semigroup and transition kernel, respectively. Then, for any
$t>0,$ and $x,y\in \Hh$,
\begin{equation}\label{thsub12211} \|\nabla P_t\|_\infty\le A_t,\end{equation} and
\begin{equation}\label{thsub12222} \|P_t(x,\cdot)-P_t(y,\cdot)\|_{\var}\le 2A_t\|x-y\|,\end{equation}where $$A_t= \sqrt{2\sup_{k\ge1}\bigg(e^{-2\gamma_kt}\int_0^\infty
e^{-F_{k,t}(r)}\,dr\bigg)}.$$ Moreover, it holds that for any $t>0$,
$$A_t\le C_t:=\sqrt{2\sup_{k\ge1}\bigg[\bigg(\int_0^\infty e^{-tf_k(r)}\,dr\bigg)\wedge\bigg(e^{-\gamma_kt}\int_0^\infty
e^{-\frac{1-e^{-2\gamma_kt}}{2\gamma_k}f_k(r)}\,dr\bigg)\bigg]},\quad t>0.$$
\end{proposition}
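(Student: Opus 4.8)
The plan is to exploit the product structure of the transition law of $X$ together with the sharp one-dimensional density bound of Proposition \ref{grth1}, reducing the gradient estimate to a family of finite-dimensional estimates that is uniform in the dimension. Write $a_n:=e^{-\gamma_nt}$ and let $\mu_x^t$ denote the law of $X_t^x$ on $\Hh$. By independence of the driving processes, $\mu_x^t=\bigotimes_{n\ge1}\mu_{n,x_n}^t$, where by Proposition \ref{grth1} (applied with $\gamma=\gamma_n$, $f=f_n$) the $n$-th factor has the smooth, strictly positive, even density $p_{n,t}(\cdot-a_nx_n)$ with $p_{n,t}'$ odd, $p_{n,t}'\in L^1(\R)$, and $\int_\R p_{n,t}'(w)^2/p_{n,t}(w)\,dw\le 2\int_0^\infty e^{-F_{n,t}(r)}\,dr$, where $F_{n,t}(r)=\int_0^tf_n(e^{-2\gamma_ns}r)\,ds$. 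Fix $g\in B_b(\Hh)$ with $\|g\|_{\Hh,\infty}\le1$, $z\in\Hh$ with $\|z\|\le1$ and $x\in\Hh$. Since $|P_tg(x+\varepsilon z)-P_tg(x)|\le\|\mu_{x+\varepsilon z}^t-\mu_x^t\|_{\var}$, and since the total variation norm is the supremum over $N$ of the total variation norm of the first-$N$-coordinate marginals, it suffices to bound $\bigl\|\bigotimes_{n=1}^N\mu_{n,x_n+\varepsilon z_n}^t-\bigotimes_{n=1}^N\mu_{n,x_n}^t\bigr\|_{\var}$ by $\varepsilon A_t$ uniformly in $N$; then \eqref{thsub12211} follows, and \eqref{thsub12222} follows by integrating along the segment joining $x$ and $y$.

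For the $N$-dimensional estimate I would differentiate the $N$-fold product of densities along the segment, writing the difference of the two densities as
\[
-\varepsilon\int_0^1\sum_{m=1}^Na_mz_m\,p_{m,t}'(w_m-a_mx_m-\theta\varepsilon a_mz_m)\prod_{n\ne m}p_{n,t}(w_n-a_nx_n-\theta\varepsilon a_nz_n)\,d\theta,
\]
which is legitimate by dominated convergence on $\R^N$ since each $p_{n,t}$ is bounded and $p_{n,t}'\in L^1$. Taking absolute values, integrating over $\R^N$, translating each $w_n$ by $a_nx_n+\theta\varepsilon a_nz_n$ and applying Fubini makes the $\theta$-integral trivial and gives the upper bound $\varepsilon\int_{\R^N}\bigl|\sum_{m=1}^Na_mz_m\,p_{m,t}'(w_m)/p_{m,t}(w_m)\bigr|\prod_{n=1}^Np_{n,t}(w_n)\,dw$. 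Under this product measure the variables $p_{m,t}'(w_m)/p_{m,t}(w_m)$ are independent with mean $\int_\R p_{m,t}'=0$, so the Cauchy--Schwarz inequality together with the Fisher-information bound above gives, using $\sum_mz_m^2=\|z\|^2\le1$,
\[
\varepsilon\Bigl(\sum_{m=1}^Na_m^2z_m^2\int_\R\frac{p_{m,t}'(w)^2}{p_{m,t}(w)}\,dw\Bigr)^{1/2}\le\varepsilon\Bigl(\sup_{k\ge1}2e^{-2\gamma_kt}\int_0^\infty e^{-F_{k,t}(r)}\,dr\Bigr)^{1/2}=\varepsilon A_t,
\]
uniformly in $N$, as required.

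For the inequality $A_t\le C_t$ it is enough to show, for each fixed $k$, that $e^{-2\gamma_kt}\int_0^\infty e^{-F_{k,t}(r)}\,dr$ is dominated by each of $\int_0^\infty e^{-tf_k(r)}\,dr$ and $e^{-\gamma_kt}\int_0^\infty e^{-\frac{1-e^{-2\gamma_kt}}{2\gamma_k}f_k(r)}\,dr$, and then to take the minimum, the supremum over $k$, multiply by $2$ and take square roots. The first bound follows from the substitution $r=e^{2\gamma_kt}u$ and monotonicity of $f_k$, which yield $F_{k,t}(e^{2\gamma_kt}u)=\int_0^tf_k(e^{2\gamma_k(t-s)}u)\,ds\ge tf_k(u)$. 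The second uses that $f_k$ is a Bernstein function with $f_k(0)=0$, hence concave, so $f_k(cr)\ge cf_k(r)$ for $c\in[0,1]$ and therefore $F_{k,t}(r)=\int_0^tf_k(e^{-2\gamma_ks}r)\,ds\ge f_k(r)\int_0^te^{-2\gamma_ks}\,ds=\frac{1-e^{-2\gamma_kt}}{2\gamma_k}f_k(r)$; combining this with $e^{-2\gamma_kt}\le e^{-\gamma_kt}$ gives the claim.

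The substantive difficulty is the passage from finitely many to infinitely many coordinates in the gradient estimate. Unlike in finite dimensions, the term-by-term derivative of the full infinite product $\prod_np_{n,t}(\,\cdot-a_n(x_n+\varepsilon z_n))$ need not converge, because the one-dimensional Fisher informations $\int_\R p_{n,t}'(w)^2/p_{n,t}(w)\,dw$ need not tend to $0$; this is precisely why the sharp estimate of Proposition \ref{grth1} is indispensable. Only after truncating to $N$ coordinates, estimating in $\R^N$, and absorbing the $n$-dependence into the supremum $\sup_{k\ge1}e^{-2\gamma_kt}\int_0^\infty e^{-F_{k,t}(r)}\,dr$ — which the constraint $\|z\|\le1$ makes possible — does the bound survive the limit $N\to\infty$. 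Making this interchange rigorous, equivalently the identification of $\|\mu_{x+\varepsilon z}^t-\mu_x^t\|_{\var}$ with the increasing limit of its finite-dimensional marginals together with the uniformity in $N$ of the $\R^N$-estimate, is the heart of the argument; the remaining steps are routine once Proposition \ref{grth1} is available.
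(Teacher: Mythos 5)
Your proof is correct, and its analytic core coincides with the paper's: everything hinges on the one-dimensional bound $\int_{\R}(p_{k,t}')^2/p_{k,t}\le 2\int_0^\infty e^{-F_{k,t}(r)}\,dr$ from Proposition \ref{grth1}, combined with independence, the vanishing of $\int_\R p_{k,t}'$ (equivalently the odd/even symmetry), and Cauchy--Schwarz to absorb the coordinate dependence into $\sup_k e^{-2\gamma_kt}\int_0^\infty e^{-F_{k,t}(r)}\,dr$ against $\|z\|^2\le 1$. Where you genuinely diverge is in the framework: you estimate $\|\mu^t_{x+\varepsilon z}-\mu^t_x\|_{\var}$ directly via the $L^1$-distance of the finite-dimensional product densities and then pass to the limit using the fact that the total variation norm on the product $\sigma$-algebra is the increasing limit of the norms of the finite-dimensional marginals (a standard martingale/Jensen argument, which you should cite or prove when writing this up). The paper instead differentiates $P_tg$ for $g\in C_b(\Hh)$ along truncated directions $h_{(n)}$, proves $L^2(\mu^x_t)$-convergence of the series of score functions, extends to $g\in B_b(\Hh)$ via \cite[Lemma 7.1.5]{PZ0}, and obtains the total variation bound by chaining along the segment from $x$ to $y$ (which is where the factor $2$ in \eqref{thsub12222} comes from). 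Your route buys a cleaner statement -- you get $\|P_t(x,\cdot)-P_t(y,\cdot)\|_{\var}\le A_t\|x-y\|$ without the factor $2$ and without the $C_b\to B_b$ extension step -- at the cost of having to justify the marginal-limit identity; the paper's route additionally yields the differentiability of $P_tg$ and the explicit formula for $D_hP_tg$, which it reuses in the remark on the strong Feller property. For the final inequality $A_t\le C_t$, your first bound (substitution $r=e^{2\gamma_kt}u$ plus monotonicity) matches the paper's, while your second bound via concavity of $f_k$ (using $f_k(0)=0$, so $f_k(cr)\ge cf_k(r)$ for $c\in[0,1]$) is a different and in fact tidier argument: it produces exactly the factor $\tfrac{1-e^{-2\gamma_kt}}{2\gamma_k}$ appearing in the stated $C_t$, whereas the paper's substitution over $[0,t/2]$ yields $\tfrac{1-e^{-\gamma_kt}}{2\gamma_k}$ and requires an extra (and slightly mismatched) step to reach the displayed constant.
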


\begin{proof} Without loss of generality, we will assume that $A_t<\infty$ for all $t>0$.  The proof of \eqref{thsub12211} is motivated from that of \cite[Theorem 4.14]{PZ1}, and so here we only point out the main differences.
Identifying $\Hh$
with $l^2$ through the basis $(e_n)$, we have
$$g(x)={g}(x_1, \cdots, x_n,\cdots),\quad x\in \Hh.$$
 For any $x=(x_n)\in l^2$, denote by
$$\mu_t^x=\prod_{k=1}^\infty\mu_{k,t}^{x_k}$$ the Borel product
measure in $\R^\N$ such that
$$\mu_{k,t}^{x_k}(dz_k)=p_{k,t}(z_k-e^{-\gamma_ktx_k})\,dz_k,\quad k\ge1.$$
According to Proposition \ref{grth1} and Hypothesis {\bf(H)}, we get that for any $g\in B_b(\Hh)$,
$$P_tg(x)=\int_{\Hh}g(y)\prod_{k=1}^\infty\mu_{k,t}^{x_k}(dy_k)=\int_{\R^\N}{g}(z)\prod_{k=1}^\infty p_{k,t}(z_k-e^{-\gamma_kt}x_k)\,dz_k.$$

We first assume that $g\in C_b(\Hh)$, where $C_b(\Hh)$ denotes the class of all bounded continuous functions on $\Hh.$
For any $x, h\in\Hh$, denote by $D_hP_tg(x)$ the directional
derivative of $P_tg$ at $x$ along the direction $h$, i.e.
$$D_hP_tg(x)=\lim_{\varepsilon\to0}\frac{P_tg(x+\varepsilon h)-P_tg(x)}{\varepsilon}.$$
We will prove that $D_hP_tg(x)$ exists for any $x,h\in\Hh$. For
$n\ge1$, set $h_{(n)}=\sum_{k=1}^n h_ke_k$, so that $h_{(n)}\to h$
in $\Hh$ as $n\to\infty.$ By the dominated convergence theorem, it
follows that
$$\aligned D_{h_{(n)}} P_tg(x)=\!&\lim_{\varepsilon\to0}\frac{P_tg(x+\varepsilon h_{(n)})-P_tg(x)}{\varepsilon}\\
=\!&-\!\int_{\R^\N}{g}(z)
\left(\sum_{k=1}^n\frac{p'_{k,t}(z_k-e^{-\gamma_kt}h_k)}{p_{k,t}(z_k-e^{-\gamma_kt}h_k)}e^{-\gamma_kt}h_k\right)
\prod_{k=1}^\infty p_{k,t}(z_k-e^{-\gamma_kt}x_k)\,dz_k\\
=\!&-\!\int_{\Hh}g(z)\left(\sum_{k=1}^n\frac{p'_{k,t}(z_k-e^{-\gamma_kt}h_k)}{p_{k,t}(z_k-e^{-\gamma_kt}h_k)}
e^{-\gamma_kt}h_k\right)\prod_{k=1}^\infty\mu_{k,t}^{x_k}(dz_k).\endaligned$$
In order to pass to limit, as $n\to\infty$, we will show that
$$\sum_{k=1}^n\frac{p'_{k,t}(z_k-e^{-\gamma_kt}h_k)}{p_{k,t}(z_k-e^{-\gamma_kt}h_k)}
e^{-\gamma_kt}h_k\quad\textrm{ converges in }L^2(\mu_t^x).$$

Indeed, Proposition \ref{grth1} shows that the function $p_{k,t}$ is even, and its derivative $p'_{k,t}$ is
odd. Hence, for any $j\neq k,$
$$\aligned &e^{-\gamma_kt}e^{-\gamma_jt}\int_{\R^2}\frac{p'_{k,t}(z_k-e^{-\gamma_kt}h_k)}
{p_{k,t}(z_k-e^{-\gamma_kt}h_k)}\frac{p'_{k,t}(z_j-e^{-\gamma_jt}h_j)}
{p_{k,t}(z_j-e^{-\gamma_jt}h_j)}\\
&\qquad\qquad \qquad\times
p_{k,t}(z_k-e^{-\gamma_kt}h_k)p_{k,t}(z_j-e^{-\gamma_jt}h_j)\,dz_k\,dz_j=0.\endaligned$$
For any $n\ge1$ and $t,r>0$, define $$F_{n,t}(r)=\int_0^t
f_n(e^{-2\gamma_n s }r)\,ds.$$ Therefore, for any $m,j\in\N$,
$$\aligned &\int_{\Hh}\left(\sum_{k=m}^{m+j} \frac{p'_{k,t}(z_k-e^{-\gamma_kt}h_k)}{p_{k,t}(z_k-e^{-\gamma_kt}h_k)}e^{-\gamma_kt}h_k\right)^2\mu_t^x(dz)\\
&=\int_{\R^{j+1}}\left(\sum_{k=m}^{m+j}
\frac{p'_{k,t}(z_k-e^{-\gamma_kt}h_k)}{p_{k,t}(z_k-e^{-\gamma_kt}h_k)}e^{-\gamma_kt}h_k\right)^2\,
\prod_{k=m}^{m+j}p_{k,t}(z_k-e^{-\gamma_kt}x_k)\,dz_k\\
&=\int_{\R^{j+1}}\left(\sum_{k= m}^{m+j}
\frac{(p'_{k,t}(z_k-e^{-\gamma_kt}h_k))^2}{p^2_{kt}(z_k-e^{-\gamma_kt}h_k)}e^{-2\gamma_kt}h^2_k\right)\,
\prod_{k=m}^{m+j}p_{k,t}(z_k-e^{-\gamma_kt}x_k)\,dz_k\\
&=\sum_{k= m}^{m+j}\int_{\R^{j+1}}
\frac{(p'_{k,t}(z_k-e^{-\gamma_kt}h_k))^2}{p^2_{kt}(z_k-e^{-\gamma_kt}h_k)}e^{-2\gamma_kt}h^2_k
\prod_{i=m}^{m+j}p_{i,t}(z_i-e^{-\gamma_it}x_i)\,dz_i\\
&=\sum_{k=m}^{m+j}e^{-2\gamma_kt}h_k^2 \int
\frac{(p'_{k,t}(x))^2}{p_{k,t}(x)}\,dx\\
&\le\bigg[ 2\sup_{k\ge1}\bigg(e^{-2\gamma_kt}\int_0^\infty
e^{-F_{k,t}(r)}\,dr\bigg)\bigg] \sum_{k=m}^{m+j}h_k^2\\
&=:A_t^2 \sum_{k=m}^{m+j}h_k^2 ,\endaligned$$ where in the inequality above we have used \eqref{grth2}.

Note that, by changing variables, we find that for any $n\in \N$,
$$D_{h_{(n)}} P_tg(x)=-\int_{\Hh}
g(z+T_tx)\left(\sum_{k=1}^n\frac{p'_{k,t}(z_k)}{p_{k,t}(z_k)}e^{-\gamma_kth_k}\right)\,\mu_t^0(dz).$$
Therefore, we get that
$$D_hP_tg(x):=\lim_{n\to \infty} D_{h_{(n)}}
P_tg(x)=-\int_{\Hh}g(z+T_tx)\left(\sum_{k=1}^\infty\frac{p'_{k,t}(z_k)}{p_{k,t}(z_k)}e^{-\gamma_kth_k}\right)\,\mu_t^0(dz),$$
and for any $0<s<1$,
$$|D_{h(n)}P_tg(x+sh_{(n)})|\le A_t\|g\|_{\Hh,\infty}\|h\|.$$ Since $$P_tg(x)=\int_{\R^{\N}} g(z+T_tx)\,\mu_t(dz),$$ $(P_t)_{t\ge0}$ is Feller, i.e.\ for any
$f\in C_b(\Hh)$, $P_tf\in C_b(\Hh)$ for any $t\ge0$. Hence, by the fact that
$${P_tg(x+h_{(n)})-P_tg(x)}=\int_0^1 D_{h_{(n)}}
P_tg(x+sh_{(n)})\,ds$$ and again by the dominated convergence
theorem, we can pass to the limit, as $n\to \infty$, and get that
$${P_tg(x+h)-P_tg(x)}=\int_0^1 D_{h}
P_tg(x+sh)\,ds$$ and for any $0<s<1$,
$$|D_{h}P_tg(x+sh)|\le A_t\|g\|_{\Hh,\infty}\|h\|.$$
Therefore, for any $g\in C_b(\Hh)$, it holds that
\begin{equation*}\label{proofthsub}|{P_tg(x+h)-P_tg(x)}|\le A_t\|g\|_{\Hh,\infty}\|h\|.\end{equation*}
According to \cite[Lemma 7.1.5]{PZ0}, the estimate above also holds
for any $g\in B_b(\Hh)$. Hence, we prove the required
assertion \eqref{thsub12211}.

Furthermore, according to the definition of $\|\nabla P_t\|_{\infty}$,
there is a constant $\varepsilon_0>0$ such that for any $x$, $z\in
\Hh$ with $\|z\|\le 1$, $t>0$, $0<\varepsilon\le \varepsilon_0$ and
$g\in B_b(\Hh)$,
$$|P_tg(x+\varepsilon z)-P_tg(x)|\le 2\varepsilon\|\nabla P_t\|_\infty \|z\|\|g\|_{\Hh, \infty}.$$ On the other hand, for any $x$, $y\in\Hh$ with $x\neq y$, we can choose $n$ large enough such that $\|x-y\|/n\le \varepsilon_0$ and set $x_i=x+i(y-x)/n$ for $i=0, \ldots, n$. Thus,
\begin{equation}\label{www}\aligned  \|P_t(x,\cdot)-P_t(y,\cdot)\|_{\var}\le &\sum_{i=1}^n\|P_t(x_i,\cdot)-P_t(x_{i-1},\cdot)\|_{\var}\\
=&\sum_{i=1}^n\sup_{\|g\|_{\Hh,\infty}\le 1}|P_tg(x_i)-P_tg(x_{i-1})|\\
\le&2\|\nabla P_t\|_\infty\sum_{i=1}^n\|x_i-x_{i-1}\|\\
=& 2\|\nabla P_t\|_\infty\|x-y\|.\endaligned\end{equation} Then, the
desired assertion \eqref{thsub12222} is a consequence of
\eqref{thsub12211} and \eqref{www}.

Finally, for any $k\ge1$ and $r, t>0$,
$$\aligned F_{k,t}(r)\ge& tf_k(e^{-2\gamma_k t}r)\endaligned$$ and $$\aligned F_{k,t}(r)\ge&\int_0^{t/2}f_k(e^{-2\gamma_k s}r)\,ds
=\int_{e^{-\gamma_k t}}^1\frac{1}{2\gamma_k u}f(ur)\,du
\ge\frac{1-e^{-\gamma_k t}}{2\gamma_k}f(e^{-\gamma_k
t}r).\endaligned$$ Thus, for any $t>0$,
\begin{equation}\label{psps1}A_t^2\le 2 \sup_{k\ge1}\bigg[\bigg(\int_0^\infty e^{-tf_k(r)}\,dr\bigg)\wedge\bigg(e^{-\gamma_kt}\int_0^\infty
e^{-\frac{1-e^{-2\gamma_kt}}{2\gamma_k}f_k(r)}\,dr\bigg)\bigg]=C_t^2<\infty.\end{equation} This proves the last assertion.
\end{proof}

At the end of this subsection, we take the following example by
applying Proposition \ref{thsub}.
\begin{example} Let $X$ be the process with components \eqref{th2000}, where the cylindrical L\'{e}vy process $Z$ is of the form \eqref{eq22}, and for any $n\ge1$, $(Z_t^n)_{t\ge0}$ is a subordinated Brownian motion
associated with the common Bernstein function $f$ satisfying
$$\lim_{r\to \infty}\frac{ f(r)}{\log (1+r)}=\infty,\quad \lim\limits_{s\to0\textrm{ or } s\to \infty}\frac{f(\lambda s)}{f(s)}\in(0,\infty)\quad\textrm{ for }\lambda>0.$$ Suppose that \begin{equation}\label{subnot1not}\sum_{n=1}^\infty \int_0^1\bigg(\beta_n^2e^{-2\gamma_n s}\int_0^{ \beta_n^{-2}e^{2\gamma_ns}}f(r^{-1})\,dr+ \int_{\beta_n^{-2}e^{2\gamma_ns}}^\infty r^{-1}f(r^{-1})\,dr\bigg)\,ds<\infty.\end{equation}
Then, the process $X$ is a Markov process taking values in $\Hh$, and for any
$t>0,$ $x,y\in \Hh$,
\begin{equation*}\label{thsub1} \|\nabla P_t\|_\infty\le C_t,\end{equation*} and
\begin{equation*}\label{thsub122} \|P_t(x,\cdot)-P_t(y,\cdot)\|_{\var}\le 2C_t\|x-y\|,\end{equation*} where $$C_t\!\!:=\!\!\sqrt{2}\Bigg\{\!\bigg(\frac{1}{\inf_{k\ge1}\beta_k}\sqrt{\int_0^\infty e^{-tf(r)}\,dr}\bigg)\wedge\sqrt{\sup_{k\ge1}\bigg(\frac{e^{-\gamma_kt}}{\beta_k^2}\int_0^\infty
e^{-\frac{1-e^{-2\gamma_kt}}{2\gamma_k}f(r)}\,dr\bigg)}\Bigg\},\,\, t>0.$$
 \end{example}
\begin{proof} We first give a upper bound for the L\'{e}vy measure of one-dimensional subordinate Brownian motion $B_{S_t}$, where
$S$ is a subordinator associated with the Bernstein function $f$ satisfying the assumptions in the example.
According to \cite[Lemma 1.1]{JS} (also see the proof of it in
\cite[Appendix 2]{JS}), the corresponding L\'{e}vy measure $\nu$
of $B_{S_t}$ is given by $\nu(dz)=\rho(z^2)\,dz$, where
\begin{equation}\label{couooo1}\aligned \rho(z^2)=&\frac{1}{(4\pi)^{1/2}}\int_0^\infty
s^{-1/2}\exp\Big(\!\!-\frac{z^2}{4s}\Big)\,\mu(ds),\endaligned\end{equation}
and $\mu$ is the L\'{e}vy measure corresponding to $f$, i.e.\
$$f(r)=\int_0^\infty \left(1-e^{-r t}\right)\,\mu(dt),$$ see \cite{SSZ} and \cite[the remark below Theorem 1.1]{BSW}.
Note that, it is equivalent to saying that $$\aligned
f(r)=&\int_0^\infty\int_0^tr e^{-r
s}\,ds\,\mu(dt)=r\int_0^\infty e^{-r
s}\mu(s,\infty)\,ds.\endaligned$$ Since for any $\lambda>0$,
$\lim\limits_{s\rightarrow\infty}f(\lambda s)/f(s)\in(0,\infty)$, by \cite[Theorem 1.4.1, Page 17;
Proposition 1.5.1, Page 22]{BSW1},
there exist $\rho\ge0$ and a positive function $l$ on
$[0,\infty)$ slowly varying at $\infty$ (i.e. $l$ satisfies
$\lim_{x\rightarrow\infty}l(\lambda x)/l(x)=1$ for each $\lambda>0$)
such that $f(s)=s^\rho l(s)$. Following \cite[Section 1.4.2,
Definitation, Page 38]{BSW1}), we call $f$ regularly varying at
infinity with index $\rho$. According to the Abelian and Tauberian
theorems (see \cite[Theorem 1.7.1, Page 37]{BSW1}),
$h_\mu(s):=\mu(s,\infty)$ is regularly varying at $0$ (with index
$-\rho$) and for $r$ large enough,
$$f(r)\asymp \mu\big(r^{-1},\infty\big).$$
Applying the Abelian and Tauberian theorems and the monotone density theorem
(see \cite[Theorem 1.7.2, Page 39]{BSW1}) to \eqref{couooo1} further
yields for $|z|$ small enough
$$\rho(z^2)\asymp|z|^{-1}\mu\big({4z^2},\infty\big).$$ Here we also have used the fact that $\tilde{h}_\mu(s):=\mu\big(\frac{1}{4s},\infty\big)$ is
regularly varying at $\infty$. Combining all the conclusions above
with the fact that $f$ is regularly varying at $\infty$, we
can choose two constants $r_1, c_1>0$ such that for $0<|z|\le
r_1,$
$$  |z|^{-1}f(z^{-2})\ge c_1\rho (z^2).$$
By following the same arguments as above and using the condition that any $\lambda>0$,
$\lim\limits_{s\rightarrow0}f(\lambda s)/f(s)\in(0,\infty)$ , one can verify that there are two constants $r_2, c_2>0$ such that for $|z|\ge
r_2,$
$$  |z|^{-1}f(z^{-2})\ge c_2\rho (z^2).$$ Since the functions $r\mapsto\rho(r^2)$ and $r\mapsto r^{-1}f(r^{-2})$ are continuous and positive on bounded interval $[r_1,r_2]$, we can finally find a constant $c_0>0$ such that
for any $|z|>0$, \begin{equation}\label{proofex1}|z|^{-1}f(z^{-2})\ge c_0\rho (z^2).\end{equation}

 By changing the variables, we find that condition \eqref{subnot1not} implies
  \begin{equation}\label{subnot1subnot1not}\sum_{n=1}^\infty \int_0^1\bigg(\beta_n^2e^{-2\gamma_n s}\int_0^{ \beta_n^{-1}e^{\gamma_ns}}rf(r^{-2})\,dr+ \int_{\beta_n^{-1}e^{\gamma_ns}}^\infty r^{-1}f(r^{-2})\,dr\bigg)\,ds<\infty.\end{equation}
  Therefore, according to \eqref{subnot1subnot1not}, \eqref{proofex1} and \eqref{not3},
  the process $X$ takes values in $\Hh$ and it is Markovian, also thanks to Corollary \ref{thex1}.

 For any $n\ge1$, the process $(\beta_nZ_t^n)_{t\ge0}$ is a subordinate Brownian motion with the
 Bernstein function $r\mapsto f_n(r):=f(\beta_n^2r)$. Then, the required assertions for gradient estimate and the coupling property follow from Proposition \ref{thsub}.
\end{proof}

\subsection{Ornstein-Uhlenbeck Processes on $\Hh$ Driven by Cylindrical
L\'{e}vy Processes} In this part, we will present the proof of Theorem \ref{th2} by using the split technique, e.g.\ see \cite[Theorem 1.1]{BSW} and \cite[Theorem 3.2]{SSW}.
\begin{proof}[Proof of Theorem $\ref{th2}$] By the assumption \eqref{th21} on $\nu_n(dz)$, we can get that $$Z_t^n= Y_t^n+ \overline{Z_t^n}, \quad n\ge1, t>0,$$ where $(Y_t^n)_{t\ge0}$ is a L\'{e}vy process on $\R$ with L\'{e}vy measure $$\nu_{n,Y}(dz)=|z|^{-1}f_n(|z|^{-2})\,dz,$$ and
 $(\overline{Z_t^n})_{t\ge0}$ is a L\'{e}vy process independent of $(Y^n_t)_{t\ge0}$.
 For any $t>0$, set $Y_t=(Y_t^{n})_{n\ge1}$ and
 $\overline{Z}_t=(\overline{Z_t^n})_{n\ge1}$.

  Let $X_1$ and $X_2$ be the processes with components \eqref{th2000} by replacing $Z=(Z_t^n)_{n\ge1}$ with $Y$ and $\overline{Z}$, respectively. Since \eqref{not2} holds, we know form Corollary \ref{thex1} that the processes $X_1$ and $X_2$ take values in $\Hh$, and both are Markovian.
 Let $(P_t^{X_1})_{t\ge0}$ and $(\overline{P_t})_{t\ge0}$ be the semigroups corresponding to $X_1$ and $X_2$, respectively. Then, by the independence of $(Y_t^n)_{t\ge0}$ and $(\overline{Z_t^n})_{t\ge0}$, we have
$$P_t=P_t^{X_1}\overline{P_t},\quad t\ge0.$$ Therefore, for any $x,z\in\Hh$, $t>0$ and $g\in B_b(\Hh)$,
$$\aligned |\nabla_zP_tg(x)|&=\limsup_{\varepsilon\to 0}\frac{1}{\varepsilon}\Big|P_tg(x+\varepsilon z)-P_tg(x)\Big|\\
&=\limsup_{\varepsilon\to 0}\frac{1}{\varepsilon}\Big|P_t^{X_1}\overline{P_t}g(x+\varepsilon z)-P_t^{X_1}\overline{P_t}g(x)\Big|\\
&\le\|\nabla P_t^{X_1}\|_\infty \|z\|\|\overline{P_t}g\|_{\Hh,\infty}\\
&\le \|\nabla P_t^{X_1}\|_\infty
\|z\|\|g\|_{\Hh,\infty},\endaligned$$ where the last inequality
follows from $\|\overline{P_t}\|_{\Hh\to \Hh}\le 1$ for any $t>0.$
 This implies that
$$\|\nabla P_t\|_\infty\le \|\nabla P_t^{X_1}\|_\infty,\quad t>0.$$
On the other hand, for any $t>0$ and $x,y\in\Hh$,
$$\aligned \|P_t(x,\cdot)-P_t(y,\cdot)\|_{\var}&=\sup_{\|g\|_{\Hh,\infty}\le 1}|P_tg(x)-P_tg(y)|\\
&=\sup_{\|g\|_{\Hh,\infty}\le
1}|P_t^{X_1}\overline{P_t}g(x)-P_t^{X_1}\overline{P_t}g(y)|\\
&\le \sup_{\|g\|_{\Hh,\infty}\le
1}|P_t^{X_1}g(x)-P_t^{X_1}g(y)|\\
&=\|P_t^{X_1}(x,\cdot)-P_t^{X_1}(y,\cdot)\|_{\var},
\endaligned$$ where the inequality above
also follows from $\|\overline{P_t}\|_{\Hh\to \Hh}\le 1$ for any
$t>0.$

Next, we turn to estimate $\|\nabla P_t^{X_1}\|_\infty$ and  $\|P_t^{X_1}(x,\cdot)-P_t^{X_1}(y,\cdot)\|_{\var}$. For any $n\ge1$, define
$$dX_t^{1,n}=-\gamma_nX_t^{1,n}dt+dY_t^n.$$
Then, the law of $X_t^{1,n}$ is an infinitely divisible probability distribution, and its characteristic exponent is
   $$\varphi_{n,t}^1(h):=-\log(\Ee(e^{ihX_t^{1,n}}))=\int_0^t\int\Big(1-e^{ihe^{-\gamma_ns}z}\Big)|z|^{-1}f_n(z^{-2})\,dz
   \,ds$$ for $t\ge0$ and $h\in\R,$ e.g.\ see \cite[Theorem 3.1]{SAT} and \cite[Proposition 2.1]{Mu}.
Note that
$$\aligned \varphi_{n,t}^1(h)=&\int_0^t\int\Big(1-\cos\big({he^{-\gamma_ns}z}\big)\Big)|z|^{-1}f_n(z^{-2})\,dz
   \,ds,\\
   =&\int_0^t\int\Big(1-\cos\big|{he^{-\gamma_ns}z}\big|\Big)|z|^{-1}f_n(z^{-2})\,dz
   \,ds,\\
   =&2\int_0^t\int_0^\infty(1-\cos u)u^{-1}f_n(e^{-2\gamma_ns}u^{-2}h^2)\,du\,ds.
\endaligned$$
Set $$\phi^1_{n,t}(r)=2\int_0^t\int_0^\infty(1-\cos
u)u^{-1}f_n(e^{-2\gamma_ns}u^{-2}r)\,du\,ds,\quad r>0.$$ Since, for
any $n\ge1$ and $u,s>0$, the function $r\mapsto(1-\cos
u)u^{-1}f_n(e^{-2\gamma_ns}u^{-2}r)$ is a Bernstein function, it
follows from \cite[Corollary 3.7]{SSZ} that $\phi^1_{n,t}$ is also a
Bernstein function. Now, according to Proposition
\ref{thsub}, for any $t>0$ and $x,y\in \Hh$,
\begin{equation*}\label{thsub1} \|\nabla P_t^{X_1}\|_\infty\le \sqrt{2\sup_{k\ge1}\bigg(e^{-2\gamma_kt}\int_0^\infty
e^{-\phi^1_{k,t}(r)}\,dr\bigg)},\end{equation*} and
\begin{equation*}\label{thsub122} \|P_t^{X_1}(x,\cdot)-P_t^{X_1}(y,\cdot)\|_{\var}\le 2 \sqrt{2\sup_{k\ge1}\bigg(e^{-2\gamma_kt}\int_0^\infty
e^{-\phi^1_{k,t}(r)}\,dr\bigg)}\|x-y\|.\end{equation*}
Furthermore, by \eqref{psps1},
$$\aligned \phi^1_{n,t}(r)\ge&(\cos1)\int_0^t\int_0^1uf_n(e^{-2\gamma_ns}u^{-2}r)\,du\,ds\\
=&\frac{\cos1}{2}\int_0^t\int_0^1f_n(e^{-2\gamma_ns}v^{-1}r)\,dv\,ds\\
\ge&\frac{\cos1}{2}\int_0^tf_n(e^{-2\gamma_ns}r)\,ds\\
\ge&\frac{\cos1}{2}\Big[\Big(tf_n(e^{-2\gamma_n t}r)\Big)\wedge\Big(\frac{1-e^{-\gamma_n t}}{2\gamma_n}f_n(e^{-\gamma_n t}r)\Big) \Big].\endaligned$$ Combining with all estimates above yields the required assertion.
\end{proof}

We close this section with some comments on Theorem \ref{th2}.

\begin{remark}
(1) When the right hand side of \eqref{th22} is finite for any $t>0$, the gradient estimate \eqref{th22}
implies the strong Feller property of the semigroup $(P_t)_{t\ge0}$,
i.e.\ for any $t>0$ and $g\in B_b(\Hh)$, $P_tg\in C_b(\Hh)$. Indeed,
in this case one can follow the proof above and show that under the
assumptions in Theorem \ref{th2}, for any $t>0$ and $g\in B_b(\Hh)$,
$P_tg\in C_b^\infty (\Hh)$, where $C_b^\infty(\Hh)$ is the space of
all infinite differentiable functions $g$ on $\Hh$ such that their
Fr\'{e}chet derivatives $D^ig$, for all $i\ge1$, are continuous and
bounded on $\Hh.$ See \cite{WJ3} for the recent study on this topic
in finite dimension setting.

(2) Suppose that $\int_0^\infty r^{-1}f_n(r^{-2})\,dr=\infty$ for any $n\ge1$. Then, under condition \eqref{th21} and by
applying \cite[Theorem 24.10 (ii)]{Sa}, we can follow the proof of
\cite[Theorem 3.3]{PZ2} to prove that the process $X$ with
components \eqref{th2000} is irreducible; that is, for any open
ball $D\subset\Hh$ and $t>0$, we have $\Pp(X_t^x\in D)>0$.
Furthermore, according to the Hasminkii theorem (see
\cite[Proposition 4.1.1]{PZ0}), for any $t>0$ and $x,y\in \Hh$,
the laws of $X_t^x$ and $X_t^y$ are equivalent, e.g.\ see
\cite[Theorem 4.12 and Remark 4.16]{PZ1}.
\end{remark}

\section{Exponential Ergodicity of Linear Evolution  Equations with Cylindrical
L\'{e}vy noise}\label{section5} We begin with the

\begin{proof}[Proof of Theorem $\ref{ergociff}$] (1) First, we mention that the assumption (i) implies that the following two conditions are satisfied:
\begin{itemize}
\item[(iii)] $$\sum_{n=1}^\infty \int_0^\infty\bigg(e^{-2\gamma_n s}\int_{\{|z|\le e^{\gamma_ns}\}}z^2\,\nu_n(dz)+ \int_{\{|z|>e^{\gamma_ns}\}}\,\nu_n(dz)\bigg)\,ds<\infty;$$

\item[(iv)]$$\sum_{n=1}^\infty \int_0^\infty e^{-\alpha\gamma_ns}\int_{\{|z|>e^{\gamma_ns}\}}|z|^\alpha\,\nu_n(dz)\,ds<\infty.$$
\end{itemize}

According to (iii) and Corollary \ref{thex1}, the process
$X=(X_t^x)_{t\ge 0}$ takes values in $\Hh$, and it is Markovian.
Without loss of generality, we can assume that $C_t<\infty$ for any
$t>0$. Then, by Theorem \ref{th2} and (ii), for any $t>0$ and
$x,y\in\Hh$,
\begin{equation}\label{ergocthereom}\aligned\|P_t(x,\cdot)-P_t(y,\cdot)\|_{\var}
&\le 2C_t\|x-y\|,\endaligned\end{equation}
where $$C_t=\sqrt{2\sup_{k\ge1}\bigg[\bigg(\int_0^\infty e^{-\frac{\cos1}{2}tf_k(r)}\,dr\bigg)\wedge\bigg(e^{-\gamma_kt}\int_0^\infty
e^{-\frac{(\cos1)(1-e^{-2\gamma_kt})}{4\gamma_k}f_k(r)}\,dr\bigg)\bigg]}.$$
(2) Clearly, (iv) gives us that
\begin{itemize}
\item[(v)] for any $n\ge1$, $$\int_{\{|z|>1\}}|z|^\alpha\,\nu_n(dz)<\infty;$$ in particular, $$\int_{\{|z|>1\}}\log(1+|z|)\,\nu_n(dz)<\infty.$$
\end{itemize}
Due to the assumptions (iii) and (v), one
can follow the proof of \cite[Proposition 2.11]{PZ2} to get that
there exists an invariant probability measure $\mu$ for the process $X$.

Indeed,
by the assumption (v) and the fact that $\gamma_n>0$, according to \cite[Theorem 4.1]{SAT} or \cite[Theorem 17.5]{Sa}, for each $n\ge1$, one dimensional Ornstein-Uhlenbeck process $(X_t^n)_{t\ge0}$ has an invariant probability measure $\mu_n$, which is the law of the random variable
$$\int_0^\infty e^{-\gamma_ns}\,dZ_s^n.$$
Therefore, the product measure $$\mu=\prod_{n=1}^\infty \mu_n$$ on $\R^\N$ is an invariant measure of $X=(X_t^x)$. To prove that $\mu$ is a probability measure on $\Hh$, it remains to show that $\mu(\Hh)=1$. Let $\xi=(\xi_n)$ be a random variable on $\R^\N$ with
$$\xi_n=\int_0^\infty e^{-\gamma_ns}\,dZ_s^n,\quad n\ge1.$$ According to Proposition \ref{thlevy}, $\xi$ takes values in $\Hh$ if and only of the assumption (iii) holds.

(3) We further infer that the process
$X$ is ergodic, i.e.\ the invariant measure $\mu$ is unique, and for any $x\in\Hh$,
$$\lim_{t\to\infty}\|P_t(x,\cdot)-\mu\|_{\var}=0.$$
Note that \eqref{ergocthereom} and $\lim_{t\to\infty}C_t=0$ imply that when $t\to \infty$,
$$\|P_t(x,\cdot)-P_t(y,\cdot)\|_{\var}$$ converges to zero locally uniformly for all $x,y\in \Hh$.
Let $(P_t)_{t\ge0}$ be the semigroup associated with the process $X$. Since $\mu P_t=\mu$ for any $t>0$,
$$\|P_t(x, \cdot)-\mu\|_{\var}\le \int\|P_t(x, \cdot)-P_t(y,\cdot)\|_{\var}\,\mu(dy)$$  holds for any $x\in \Hh$ and $t>0$. This along with the statement above yields the second required
assertion.

Let $\mu_1$ and $\mu_2$
be invariant measures for $(P_t)_{t\ge0}$. Then,
$$\|\mu_1-\mu_2\|_{\var}\le \int\|P_t(x,
\cdot)-P_t(y,\cdot)\|_{\var}\,\mu_1(dx)\,\mu_2(dy),$$ which
gives us that $\mu_1=\mu_2$, by
letting $t\to\infty.$ This proves the
uniqueness of invariant measure.

(4) To obtain the explicit estimates for the ergodicity of the process $X$, we shall further make full use of \eqref{ergocthereom}. For any $t, s>0$, $g\in B_b(\Hh)$ and $\alpha\in(0,1)$, by the Markov property,
$$\aligned |P_tg(x)-P_{t+s}g(x)|=&|\Ee (P_tg(x)-P_tg(X_s^x))|\\
\le &\Ee|P_tg(x)-P_tg(X_s^x)|\\
=&\Ee\bigg(\frac{|P_tg(x)-P_tg(X_s^x)|^\alpha}{\|X_s^x-x\|^\alpha} |P_tg(x)-P_tg(X_s^x)|^{1-\alpha}\|X_s^x-x\|^\alpha\bigg)\\
\le &(2C_t)^\alpha \|g\|_{\Hh,\infty}^\alpha \times  (2\|g\|_{\Hh,\infty})^{1-\alpha}\times\Ee\big( \|X_s^x-x\|^\alpha\big)\\
=&2C_t^\alpha\|g\|_{\Hh,\infty} \Ee\big(\|T_sx-x+Y_s\|^\alpha\big),
\endaligned$$ where $T_sx=(e^{-\gamma_ns}x_n)_{n\ge1}$ and $$Y_s=\bigg(\int_0^se^{-\gamma_n(s-u)}\,dZ_u^n\bigg)_{n\ge1}.$$

Since $\alpha\in (0,1]$, it holds that $$
\Ee\big(\|T_sx-x+Y_s\|^\alpha\big)\le\Ee\big((\|T_sx-x\|+\|Y_s\|)^\alpha\big)\le
\|T_sx-x\|^\alpha+\Ee\|Y_s\|^\alpha$$ and $$\|T_sx-x\|^\alpha\le
\|T_sx\|^\alpha+\|x\|^\alpha\le
(e^{-s\alpha\inf_{n\ge1}\gamma_n}+1)\|x\|^\alpha\le
2\|x\|^\alpha,$$ where we have used the fact that for any $a,b\ge0$ and $\alpha\in(0,1]$,
 $$(a+b)^\alpha\le a^\alpha+b^\alpha.$$ On the other hand, under the assumptions (iii) and (iv), one will prove below that $\Ee\|Y_s\|^\alpha$ is uniformly bounded for all $s>0$, i.e.\ \begin{equation}\label{ppp}\sup_{s>0}\Ee\|Y_s\|^\alpha<\infty.\end{equation} If \eqref{ppp} holds, then  $$\sup_{s>0}\Ee\big(\|T_sx-x+Y_s\|^\alpha\big)\le C(1+\|x\|^\alpha)$$holds for any $x\in \Hh$ and some constant $C>0.$ Combining with all the conclusions above, we get that
$$|P_tg(x)-P_{t+s}g(x)|\le C(1+\|x\|^\alpha)C_t^\alpha\|g\|_{\Hh,\infty}.$$ That is,
$$\|P_t(x,\cdot)-P_{t+s}(x,\cdot)\|_{\var}\le  C(1+\|x\|^\alpha)C_t^\alpha.$$ The proof is completed by letting $s\to \infty$ and noting that $\mu$ is the invariant measure of $(P_t)_{t\ge0}$.

(5) Next, we turn to prove \eqref{ppp}. For any $n\ge1$ and $t>0$,
set $$Y^n_t=\int_0^t e^{-\gamma_n(t-s)}\, dZ^n_s.$$ Since
$(Z_t^n)_{t\ge0}$ is a symmetric L\'{e}vy process, $Y_t^n$ is a
symmetric infinitely divisible random variable with L\'{e}vy measure
$\nu_{n,t}$ as follows:
$$\nu_{n,t}(D)= \int_0^t\nu_n(e^{-s\gamma_n}D)\,ds,\qquad D\in\mathscr{B}(\R\setminus\{0\}).$$
In the following, we see $Y_t^n$ as the position of some symmetric L\'{e}vy process with L\'{e}vy measure $\nu_{n,t}$ at time 1. According to the L\'{e}vy-It\^{o} decomposition, see \cite[Chapter 4]{Sa}, we know that
$$Y_t^n=\int_0^1\int_{\{|z|\le 1\}}z \,\widetilde{N}_{n,t}(dz,ds)+\int_0^1\int_{\{|z|> 1\}}z \,N_{n,t}(dz,ds)=:Y_t^{n,1}+Y_t^{n,2},$$
where $\widetilde{N}_{n,t}(dz,ds)$ is the compensated Poisson measure, i.e.\ $$\widetilde{N}_{n,t}(dz,ds)=N_{n,t}(dz,ds)-\nu_{n,t}(dz)\,ds.$$ Set $Y_t^1=\big(Y_t^{n,1})_{n\ge1}$ and $Y_t^2=\big(Y_t^{n,2})_{n\ge1}$. Since $\widetilde{N}_{n,t}(dz,ds)$ is a square integrable martingale measure,
for any $t>0$, by the Cauchy-Schwarz inequality,
$$\aligned \Ee\|Y_t^1\|^\alpha\le&\Ee(\|Y_t^1\|^2)^{\alpha/2}\\
=&\Big(\sum_{n=1}^\infty\Ee |Y_t^{n,1}|^2\Big)^{\alpha/2} \\
=&\Big(\sum_{n=1}^\infty\int_0^1\,ds\int z^2\,\nu_{n,t}(dz)\Big)^{\alpha/2}\\
=&\Big(\sum_{n=1}^\infty\int_0^te^{-2\gamma_ns}\,\int_{\{|z|\le
e^{\gamma_ns}\}}z^2\,\nu_n(dz)\,ds\Big)^{\alpha/2}.\endaligned$$ On
the other hand, $Y_t^{n,2}$ is a compound Poisson random variable
with intensity $\nu_{n,t}(\{z\in \R,|z|>1\}).$ As the same argument
as that for $Y_t^n$, we regard $Y_t^{n,2}$ as the position of some compound
Poisson process $(E_{n,t}(s))_{s\ge0}$ with L\'{e}vy measure
$\I_{\{|z|>1\}}(z)\,\nu_{n,t}(dz)$ at time 1. According to the
It\^{o} formula for compound Poisson process, and the facts that
$Y_0^{n,2}=0$ and $(a+b)^\alpha\le a^\alpha+b^\alpha$ for any
$a,b\ge0$ and $\alpha\in(0,1]$, we find
$$\aligned \Ee|Y_t^{n,2}|^\alpha=& \Ee\int_0^1 \int_{\{|z|> 1\}} \Big(|E_{n,t}(s)+z|^\alpha-|E_{n,t}(s)|^\alpha\Big)\,\nu_{n,t}(dz)\,ds\\
\le& \Ee\int_0^1 \int_{\{|z|> 1\}} \Big((|E_{n,t}(s)|+|z|)^\alpha-|E_{n,t}(s)|^\alpha\Big)\,\nu_{n,t}(dz)\,ds\\
\le &\int_0^1\,ds\int_{\{|z|>1\}} |z|^\alpha\,\nu_{n,t}(dz)\\
 =&\int_0^t e^{-\alpha\gamma_ns}\int_{\{|z|>e^{\gamma_ns}\}}|z|^\alpha\,\nu_n(dz)\,ds. \endaligned$$ Thus, we get that
$$\Ee \|Y_t^2\|^\alpha\le\sum_{n=1}^\infty\Ee|Y_t^{n,2}|^\alpha\le \sum_{n=1}^\infty\int_0^t e^{-\alpha\gamma_ns}\int_{\{|z|>e^{\gamma_ns}\}}|z|^\alpha\,\nu_n(dz)\,ds  .$$
Combining with all the conclusions above with the assumptions (iii) and (iv), we get that
$$\aligned\sup_{t>0}\Ee \|Y_t\|^\alpha&\le \Big(\sum_{n=1}^\infty\int_0^\infty e^{-2\gamma_ns}\,\int_{\{|z|\le e^{\gamma_ns}\}}z^2\,\nu_n(dz)\,ds\Big)^{\alpha/2}\\
&\quad+\sum_{n=1}^\infty\int_0^\infty e^{-\alpha\gamma_ns}\int_{\{|z|>e^{\gamma_ns}\}}|z|^\alpha\,\nu_n(dz)\,ds\\
&<\infty.\endaligned$$The required assertion \eqref{ppp} follows.
\end{proof}

To conclude this section and to furthermore illustrate the power of
Theorem \ref{ergociff}, we take the following example, which has
been studied in \cite[Example 2.7]{EXZ1} and \cite[Example
2.9]{EXZ2}.

\begin{example}\label{improv} Consider the following stochastic linear equation on $D=[0,\pi]^d$ with the Dirichlet boundary condition
\begin{equation*}\label{levycutoff}
    \begin{cases}
        dX(t,\xi)=\Delta X(t,\xi)\,dt+dZ_t(\xi),\\
        X(0,\xi)=X(\xi),\\
        X(t,\xi)=0,\quad \xi\in \partial D
    \end{cases}
\end{equation*}
 where $(Z_t)_{t\ge0}$ is defined below. The Laplace operator $-\Delta$ with the Dirichlet
 boundary condition on $D$ has the following eigenfunctions
 $$e_n=\bigg(\frac{2}{\pi}\bigg)^{d/2}\sin (n_1\xi)\,\cdots \sin (n_d\xi_d),\quad n\in \N^d,\xi\in D,$$
 and eigenvalues $$\gamma_n=|n|^2=n_1^2+n_2^2+\cdots+n_d^2,\quad n\in \N^d.$$
 We study the dymamics defined above in the Hilbert space $\Hh=L^2(D)$ with the orthonormal basis $\{e_n\}$. Let us assume that $Z=(Z_t)$ is a cylindrical $\alpha$-stable noise written in the form
$$Z_t=\sum_{n\in \N^d}|n|^\beta Z_t^ne_n,\quad t\ge0,$$ where $(Z_t^n)_{n\ge1}$ are i.i.d.\
symmetric $\alpha$-stable processes with $\alpha\in(0,2)$ and $\beta$ is a real number.
According to Theorem \ref{ergociff}, if
$$\frac{d}{\alpha}<\frac{2}{\alpha}-\beta,$$ then the system indeed
takes values in $\Hh$, and it is exponentially ergodic. The
condition above implies that by choosing $\beta$ negative enough the
system can be exponentially ergodic for all $d\ge1$, which also
extends \cite[Example 2.9]{EXZ2}.\end{example}

\begin{ack} Financial support through National Natural Science Foundation of China (No.\ 11201073), the Program for New Century Excellent Talents in Universities
of Fujian (No.\ JA12053) and the Program for Nonlinear Analysis and Its
Applications (No.\ IRTL1206) is gratefully acknowledged. The author
would like to thank the referee for careful reading and valuable
comments and suggestions.
\end{ack}

\end{document}